\numberwithin{equation}{section}
\newtheorem{Theorem}{Theorem}[section]
\newtheorem{Lemma}{Lemma}[section]
\newtheorem{Assumption-Notation}[Theorem]{Assumption-Notation}
\newtheorem{Remark}{Remark}[section]
\newtheorem{Corollary}{Corollary}[section]
\newtheorem{lem}{Lemma}
\newtheorem{rem}{Remark}
\renewcommand\@biblabel[1]{}
\renewenvironment{thebibliography}[1]
{\section*{\refname}%
\@mkboth{\MakeUppercase\refname}{\MakeUppercase\refname}%
\list{\@biblabel{\@arabic\c@enumiv}}%
{\settowidth\labelwidth{\@biblabel{#1}}%
\leftmargin\labelwidth
\advance\leftmargin\labelsep
\advance\leftmargin by 2em%
\itemindent -2em%
\@openbib@code
\usecounter{enumiv}%
\let\p@enumiv\@empty
\renewcommand\theenumiv{\@arabic\c@enumiv}}%
\sloppy
\clubpenalty4000
\@clubpenalty \clubpenalty
\widowpenalty4000%
\sfcode`\.\@m}
{\def\@noitemerr
{\@latex@warning{Empty `thebibliography' environment}}%
\endlist}
\long\def\symbolfootnote[#1]#2{\begingroup
\def\thefootnote{\fnsymbol{footnote}}\footnote[#1]{#2}\endgroup}
\journal{Insurance: Mathematics and Economics}
\begin{document}
\begin{frontmatter}
\title{Pricing variable annuities with multi-layer expense strategy}

 \author{Jiang Zhou }
 \ead{1101110056@pku.edu.cn}
 \author{Lan Wu}
\address{School of Mathematical Sciences, Peking University, Beijing 100871, P.R.China}

\begin{abstract}{We study the problem of pricing variable annuities with a multi-layer expense strategy, under which the insurer charges fees from the policyholder's account only when the account value lies in some pre-specified disjoint intervals, where on each pre-specified interval, the fee rate is fixed and can be different from that on other interval. We model the asset that is the underlying fund of the variable annuity by a hyper-exponential jump diffusion process. Theoretically, for a jump diffusion process with hyper-exponential jumps and three-valued drift, we obtain expressions for the Laplace transforms of its distribution and its occupation times, i.e., the time that it spends below or above a pre-specified level. With these results, we derive closed-form formulas to determine the fair fee rate. Moreover, the total fees that will be collected by the insurer and the total time of deducting fees are also computed. In addition, some numerical examples are presented to illustrate our results.}
\end{abstract}

\begin{keyword} Variable annuities; Multi-layer expense strategy; Hyper-exponential jump diffusion process; Laplace transform; Occupation times.
\end{keyword}
\end{frontmatter}
\section{Introduction}
A variable annuity (VA) is an equity-linked life insurance product with minimum guarantee on the death or maturity benefits. Generally, the insured chooses a mutual fund according to his/her risk preference, and contributes an initial premium to invest in the chosen fund. A fascinating feature of VAs, which makes them different from mutual funds, is that they usually provide some minimum guaranteed payoffs.  There are many kinds of guarantees embedded in VAs (see, e.g., Bauer et al. (2008)) and many papers investigating the problem of pricing and hedging VAs, see for example, Gerber and Shiu (2003) and Bacinello et al. (2011). In this paper, we focus on the pricing of a variable annuity with level Guaranteed Minimum Maturity Benefits (GMMBs). But we remark that our results can be extended to price a variable annuity with level Guaranteed Minimum Death Benefits (GMDBs).

In general, for a VA, fees for the provided guarantees  are deducted continuously from the policyholder's account during its lifetime by a fixed rate. If the VA expires earlier than expected (one main cause is that the policyholder lapses the policy), it is possible that fees collected by the insurer may be not enough to cover the guarantees. Therefore, how to reduce the surrender rate is an important question to the insurer.
In Bauer et al. (2008), the authors have noted that a fixed fee rate will produce incentives for the policyholder to lapse the policy. The reason is that when the account value is higher, the guarantees (like put options) are out-of-the-money whereas the insured pays more fees. Thus, designing some proper fee charging method may reduce the possibility of surrendering the contract.

In Bernard et al. (2014a), under the simple Black-Scholes model, the authors proposed a state-dependent fee structure, under which fees are charged only if the account value is smaller than a pre-specified level.
Mathematically, if we denote by $F_t$ the value of the account at time $t$, then the total expenses charged by the insurer from time $0$ to $t$ are given by
\begin{equation}
\int_{0}^{t}\alpha_1F_{s}\rm{\bf{1}}_{\{F_{s} < B_1\}}ds,
\end{equation}
where $B_1$ is the pre-specified level. Similar fee charging strategy has also been considered in Zhou and Wu (2015) under the double exponential jump diffusion process. This kind of fee deducting approach has some advantages. For example, it can avoid effectively the problem that the policyholder lapses the policy due to the high fees when the guarantees are deep out-of-the-money. However, as the insurer cannot charge fees when the policyholder's account value becomes large, the fair fee rate $\alpha_1^*$ obtained is too high to be used in practice, see numerical results in section 4 of Bernard et al. (2014a), section 5 of Delong (2014) and section 4 of Zhou and Wu (2015).

From the investigation on optimal surrender for VAs (see Bernard et al. (2014b) for example), we know that the optimal surrender strategy under a fixed fee rate structure (i.e., $B_1=\infty $ in (1.1)) is a threshold strategy, i.e., the policyholder will lapse the policy when the value of the account exceeds a time-dependent barrier.
Although the optimal surrender strategy for $B_1 < \infty$ has not been investigated sufficiently, it is very likely that the policyholder will surrender the policy when the value of the account becomes large enough. This leads to the following problem: if the market rises continuously to the surrender barrier (just after the inception of the policy) and the insured  lapses the policy, then the insurer will have a few (or even no) fees income under the strategy (1.1).

In this paper, we extend the research in Bernard et al. (2014a) and Zhou and Wu (2015), and consider a multi-layer fee collecting method, under which the insurer charges expenses not only when the account value is lower than some pre-specified level but also when the account value exceeds some pre-specified level. That is, the total expenses deducted until time $t$ are given by
\begin{equation}
\int_{0}^{t}\left(\alpha_1F_{s}\rm{\bf{1}}_{\{F_{s} < B_1\}}+\alpha_2F_{s}\rm{\bf{1}}_{\{F_s \geq B_2\}}\right)ds,
\end{equation}
where $B_1 \leq B_2$ and usually $\alpha_2 < \alpha_1$. As formula (1.2) contains the term $\alpha_1F_s\rm{\bf{1}}_{\{F_s < B_1\}}$, the fee strategy (1.2) inherits some advantage of (1.1). Moreover, the second term $\alpha_2F_s\rm{\bf{1}}_{\{F_s \geq B_2\}}$ can alleviate the problem discussed in the last paragraph. This is because when $F_t$ exceeds $B_2$, this term can reduce the return of the policyholder's account by $\alpha_2$, which makes the account value exceed the surrender barrier with less probability. In addition, numerical examples in section 5 illustrate that the fair fee rates $\alpha_1^*$ and $\alpha_2^*$ are reasonable.

We should mention that Delong (2014) has considered a general state-dependent fee structure (which includes (1.2)) under a general L\'evy process. However, the fee rate obtained from the pricing principle (4.4) in his paper may yield arbitrage opportunities under some L\'evy process (e.g.,  a hyper-exponential jump diffusion process considered in this paper), see the discussion presented after formula (4.5) in Delong (2014). More importantly, we have derived formulas for the Laplace transforms of the distribution and the occupation times of a jump diffusion process with three-valued drift and hyper-exponential jumps. Our approach is remarkable and can be applied to more complicated fee structure (see Remark 2.1).

The reminder of this paper is organized as follows. Our model and an important preliminary result are given in section 2. In section 3, we derive formulas for the distribution of an important random variable, and then in section 4, we apply these formulas to price a variable annuity with guaranteed minimum maturity benefit. Finally, we give some numerical results in section 5 and draw conclusion in section 6.

\section{Details of the model and an important preliminary result}
Suppose that $X=(X_t)_{t \geq 0}$ is a hyper-exponential jump diffusion process, i.e.,
\begin{equation}
  X_t = X_0 + \mu t+\sigma W_t + \sum_{k=1}^{N_t}Z_k,
\end{equation}
where $\sigma > 0$, $\mu$ and $X_0$ are constants; $\{W_t; t\geq 0\}$ is a standard Brownian motion; $\{\sum_{k=1}^{N_t}Z_k; t\geq 0\}$, independent of $\{W_t; t\geq 0\}$, is a compound Poisson process; the intensity of the Poisson process $\{N_t; t\geq 0\}$ is given by $\lambda$ and the probability density function of $Z_1$ (denoted by $f_Z(z)$) is given by
\begin{equation}
f_{Z}(z) = \sum_{i=1}^{m}p_i\eta_i e^{-\eta_i z}\rm{\bf{1}}_{\{z > 0\}} + \sum_{j=1}^{n}q_j\vartheta_j e^{\vartheta_j z}\rm{\bf{1}}_{\{z < 0\}}.
\end{equation}
Here, in (2.2), $\textbf{1}_A$ is the indicator function of a set $A$; $p_i > 0$, $\eta_i > 0$ for all $i=1, \ldots, m$; $q_j > 0$, $\vartheta_j > 0$ for all $j=1, \ldots, n$; $\sum_{i=1}^{m}p_i+\sum_{j=1}^{n}q_j=1$. In addition, we assume that  $\eta_1<\eta_2<\cdots<\eta_m$ and $ \vartheta_1<\vartheta_2<\cdots<\vartheta_n$.

In this paper, we let $S_t$ be the time-t value of one unit of the reference fund underlying a variable annuity and assume that
\begin{equation}
S_t = S_0e^{X_t-X_0}.
\end{equation}
Under the multi-layer expense strategy (1.2), if we denote by $F_t$ the policyholder's account value at time $t$, then its dynamics are given by the following stochastic differential equation (SDE):
\begin{equation}
dF_t= F_{t-}\frac{d S_t}{S_{t-}}-\alpha_1 F_{t-} \rm{\bf{1}}_{\{F_{t-} < B_1\}}dt- \alpha_2 F_{t-} \rm{\bf{1}}_{\{F_{t-} \geq B_2\}}dt ,\ \ t>0,
\end{equation}
where $\alpha_i$ and $B_i$, $i=1,2$, are the deduction fee rate and the pre-specified level, respectively. Besides, the initial value $F_0$ is the single premium invested by the insured.

\begin{Remark}
Our approach can also be used to the following complicated fee structure:
\begin{equation}
\int_{0}^{t}\left(\alpha_1F_s\rm{\bf{1}}_{\{F_s < B_1\}}+\sum_{i=2}^{m}\alpha_iF_s\rm{\bf{1}}_{\{B_i \leq F_s < B_{i+1}\}}+
\alpha_{m+1}F_s\rm{\bf{1}}_{\{F_s \geq B_{m+2}\}}\right)ds,
\end{equation}
where $B_1 \leq B_2 \leq \cdots\leq B_{m+2}$. Note that formula (2.5) is similar to the so-called multi-layer dividend strategy, which is one of the motivations of this paper and has been studied by many papers (see Yang and Zhang (2009) for instance). This is the reason why our fee collecting method is called multi-layer expense strategy.
\end{Remark}

Set $U=(U_t)_{t \geq 0}$ be the unique strong solution to the following SDE
\begin{equation}
\begin{split}
&dU_t = dX_t - \alpha_1 \rm{\bf{1}}_{\{U_t < b_1\}}dt- \alpha_2 \rm{\bf{1}}_{\{U_t \geq b_2\}}dt,\ \  t>0,\\
&and  \ \ \ U_0 = X_0,
\end{split}
\end{equation}
where $b_1 = \ln\left(\frac{B_1}{F_0}\right)$ and $b_2 = \ln\left(\frac{B_2}{F_0}\right)$. Then, it follows from It$\hat{o}$'s formula that
\begin{equation}
F_t = F_0e^{U_t}, \ \ t\geq 0,
\end{equation}
if $U_0=0$. In addition, by using a similar arguments to that in Remark 3 of Kyprianou and Loeffen (2010), one can conclude that the process $U$ is a strong Markov process.

In this paper, we consider a VA with level GMMB, which has a payment $G(F_T)$ at its maturity $T$, where $G(\cdot)$ is a payoff function. An example of $G(\cdot)$ is that $G(x)=(K-x)_+$ for some $K > 0$. To price this VA, we need to compute the following expectation
\begin{equation}
 E^*\left[e^{-rT}G(F_T)\right],
\end{equation}
under an equivalent martingale measure $ P^*$ (i.e., $e^{-rt}e^{X_t}$ is martingale under $ P^*$), where $r$ denotes the continuously compounded constant risk-free rate. Here, similar to Zhou and Wu (2015), we use the Cram\'er-Esscher transform (proposed in Gerber and Shiu (1994)) to obtain the pricing martingale measure $P^*$. An attractive property of the Cram\'er-Esscher transform is that the process $X$, under the martingale measure $P^*$, is also a hyper-exponential jump diffusion process (see, e.g., Appendix A in Asmussen et al. (2004)).

Therefore, without loss of generality, we suppose that the process $X$, under the martingale measure $ P^*$, is given by (2.1) and (2.2). For the sake of brevity, in the following, we write simply $ P$ rather than $P^*$. Furthermore, we denote by $\mathbb P_x$ the law of $X$ such that $X_0=x$ and by $\mathbb E_x$ the corresponding expectation; and we write $\mathbb P$ and $\mathbb E$ when $X_0=0$ for simplicity. In addition, for $q>0$, we let $e(q)$ be an exponential random variable with expectation $1/q$, which is independent of the process $U$ under $\mathbb P_x$.

Of course, it is difficult to obtain the expression of (2.8) with $F_t$ given by (2.7). But, if we can obtain the distribution of $U_{e(q)}$ for some $q > 0$, then the Laplace transform of (2.8) can be computed as follows:
\begin{equation}
\int_{0}^{\infty}e^{-sT}\mathbb E\left[e^{-rT}G(F_T)\right]dT=\frac{1}{s+r}\mathbb E\left[G(F_{e(s+r)})\right].
\end{equation}
Moreover, the total time of deducting fees, i.e.,
\begin{equation}
\int_{0}^{T}\rm{\bf{1}}_{\{U_t < b_1\}}dt + \int_{0}^{T}\rm{\bf{1}}_{\{U_t \geq b_2\}}dt,
\end{equation}
can also be calculated from the distribution of $U_{e(q)}$ (see Remark 3.3). In short, deriving the expression for the probability distribution of $U_{e(q)}$ with $q > 0$ is a key task in this article.

\begin{Remark}
If we let $\tau$ represent the random variable denoting the time of death of the policyholder of a VA with level GMDB (whose payment at $\tau$ is $G_0(F_{\tau})$), then we can compute the price of this VA as following:
\begin{equation}
\mathbb E\left[e^{-r \tau }G_0(F_{\tau})\rm{\bf{1}}_{\{\tau < T\}}\right]= \int_{0}^{T}\mathbb E\left[e^{-r t }G_0(F_t)\right]g(t)dt,
\end{equation}
where $g(t)$ is the density function of $\tau$. As the term $\mathbb E\left[e^{-r t }G_0(F_t)\right]$ can be obtained from (2.9) via taking inverse Laplace transform, so (2.11) can be computed approximatively. However, this procedure involves massive numerical computations, especially when $T$ is large. Alternatively, we can use a similar idea to that in Remark 2.3 of Zhou and Wu (2015) to calculate (2.11) by using the results on the distribution of $U_{e(q)}$.
\end{Remark}

The following lemma, which characters the probability distribution function of $U_{e(q)}$, gives us some important boundary conditions (see (2.14), (2.17) and (2.20)). The proof of Lemma 2.1 is very similar to that of Theorem 2.1 in Zhou and Wu (2015), thus we omit the details.
\begin{Lemma}
(1) For given $y > b_2 > b_1$, consider a function $F_1(x)$ such that $F_1(x)$ is bounded and continuous on $\mathbb R$ and twice continuously differentiable on $\mathbb R$ except at $b_1$, $b_2$ and $y$. Assume $F_1(x)$
solves
\begin{equation}
\begin{split}
&\Gamma F_1(x)=qF_1(x),  \ \ x < y \ \ and \ \ x \neq b_1,\ b_2, \\
&\Gamma F_1(x)=qF_1(x)-q, \ \ x>y,
\end{split}
\end{equation}
where
\begin{equation}
\begin{split}
\Gamma F_1(x)=
&\frac{\sigma^2}{2}F_1^{\prime \prime}(x) + \left(\mu -\alpha_1\rm{\bf{1}}_{\{x < b_1\}}-\alpha_2\rm{\bf{1}}_{\{x \geq b_2\}}\right)F_1^{\prime}(x) \\
&+ \lambda \int_{-\infty}^{\infty}F_1(x + z)f_{Z}(z)dz- \lambda F_1(x).
\end{split}
\end{equation}
Moreover, assume that the derivative of $F_1(x)$ is continuous at $b_1$, $b_2$ and $y$, i.e.,
\begin{equation}
F_1^{\prime}(b_1-)=F_1^{\prime}(b_1+), \ \ F_1^{\prime}(b_2-)=F_1^{\prime}(b_2+) \ \ and \ \ F_1^{\prime}(y-)=F_1^{\prime}(y+).
\end{equation}
Then
\begin{equation}
F_1(x)=\mathbb P_x\left(U_{e(q)}> y\right).
\end{equation}

(2) For given $y < b_1 < b_2$, let $F_2(x)$ be bounded and continuous on $\mathbb R$ and twice continuously differentiable on $\mathbb R$ except at $b_1$, $b_2$ and $y$. If $F_2(x)$
solves
\begin{equation}
\begin{split}
&\Gamma F_2(x)=q F_2(x),  \ \ x > y \ \ and \ \ x \neq b_1, b_2, \\
&\Gamma F_2(x)=q F_2(x)-q, \ \ x < y,
\end{split}
\end{equation}
and satisfies
\begin{equation}
F_2^{\prime}(b_1-)=F_2^{\prime}(b_1+), \ \ F_2^{\prime}(b_2-)=F_2^{\prime}(b_2+) \ \ and \ \ F_2^{\prime}(y-)=F_2^{\prime}(y+),
\end{equation}
then
\begin{equation}
F_2(x)=\mathbb P_x\left(U_{e(q)}< y\right).
\end{equation}

(3) For given $b_1 < y < b_2$, let $F_3(x)$ be bounded and continuous on $\mathbb R$ and twice continuously differentiable on $\mathbb R$ except at $b_1$, $b_2$ and $y$. If $F_3(x)$
solves
\begin{equation}
\begin{split}
&\Gamma F_3(x)=q F_3(x),  \ \ x > b_1 \ \ and \ \ b_1< x < y,\\
&\Gamma F_3(x)=q F_3(x)- q, \ \ x > y \ \ and \ \ x \neq b_2,
\end{split}
\end{equation}
and satisfies
\begin{equation}
F_3^{\prime}(b_1-)=F_3^{\prime}(b_1+), \ \ F_3^{\prime}(b_2-)=F_3^{\prime}(b_2+) \ \ and \ \ F_3^{\prime}(y-)=F_3^{\prime}(y+),
\end{equation}
then
\begin{equation}
F_3(x)=\mathbb P_x\left(U_{e(q)} > y\right).
\end{equation}
\qed
\end{Lemma}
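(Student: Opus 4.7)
The plan is to verify each claimed identity by applying It\^o's formula to the discounted process $e^{-qt}F_i(U_t)$ and running a standard martingale-plus-optional-sampling argument against the independent exponential clock $e(q)$. I describe part (1) in detail; parts (2) and (3) are structurally identical once the forcing region in the PDE is adjusted.

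The first step is to form
\[
M_t = e^{-qt}F_1(U_t) + q\int_0^t e^{-qs}\mathbf{1}_{\{U_s > y\}}\,ds
\]
and apply the generalized It\^o formula to $e^{-qt}F_1(U_t)$. By the derivative-matching conditions (2.14), $F_1$ is $C^1$ on $\mathbb{R}$ and $C^2$ off the finite set $\{b_1,b_2,y\}$. Since the continuous martingale part of $U$ has nondegenerate quadratic variation $\sigma^2 t$, the random set $\{s : U_s \in \{b_1,b_2,y\}\}$ has zero Lebesgue measure almost surely, so the It\^o formula applies without any local-time corrections at the three break points. The bounded-variation part of the expansion collects
\[
\int_0^t e^{-qs}\bigl(\Gamma F_1 - qF_1\bigr)(U_{s-})\,ds,
\]
which by (2.12) equals $-q\int_0^t e^{-qs}\mathbf{1}_{\{U_s > y\}}\,ds$. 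Hence $M_t$ is a local martingale.

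The second step is to promote $M_t$ to a true martingale and pass to the limit. Because $F_1$ is bounded and $\bigl|q\int_0^t e^{-qs}\mathbf{1}_{\{U_s>y\}}\,ds\bigr|\le 1$, the process $M$ is uniformly bounded, so $\mathbb{E}_x[M_t] = F_1(x)$ for every $t\ge 0$. Letting $t\to\infty$, $e^{-qt}F_1(U_t)\to 0$ in $L^1$; by Fubini and the independence of $e(q)$ from $U$,
\[
F_1(x) = q\,\mathbb{E}_x\!\Bigl[\int_0^\infty e^{-qs}\mathbf{1}_{\{U_s>y\}}\,ds\Bigr] = \mathbb{P}_x\bigl(U_{e(q)}>y\bigr),
\]
which is (2.15).

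The main obstacle is justifying the It\^o expansion cleanly for $F_1$, which is only piecewise $C^2$: condition (2.14) is precisely what ensures no local-time term appears when the semimartingale $U$ crosses $b_1$, $b_2$, or $y$. One must also verify that the jump part of $U$ is treated correctly, so that the integral $\lambda\int_{\mathbb{R}}\bigl(F_1(x+z)-F_1(x)\bigr)f_Z(z)\,dz$ appearing in $\Gamma$ arises as the compensator of a purely discontinuous martingale; boundedness of $F_1$ makes that martingale square-integrable on every finite interval. Parts (2) and (3) then go through with only cosmetic changes: in (2) the PDE forcing $-q$ lives on $\{x<y\}$, giving the occupation of $\{U_s<y\}$ and hence $\mathbb{P}_x(U_{e(q)}<y)$; in (3) the forcing again lives on $\{x>y\}$, now straddling $b_2$, and produces $\mathbb{P}_x(U_{e(q)}>y)$.
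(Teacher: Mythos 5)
Your proof is correct and follows the standard verification route (It\^o's formula for the piecewise-$C^2$, globally $C^1$ function applied to $e^{-qt}F_i(U_t)$, the bounded-martingale argument, and the exponential-clock identity $q\,\mathbb{E}_x[\int_0^\infty e^{-qs}\mathbf{1}_{\{U_s>y\}}ds]=\mathbb{P}_x(U_{e(q)}>y)$), which is exactly the approach of Theorem 2.1 in Zhou and Wu (2015) that the paper invokes in lieu of giving the details.
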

\begin{Remark}
Although Lemma 2.1 holds, it is not easy to obtain the distribution of $U_{e(q)}$ by solving (2.12), (2.16) and (2.19) directly. In next section, we derive the expression of the probability distribution of $U_{e(q)}$ by using an another approach rather than solving the equations in Lemma 2.1.
\end{Remark}

\section{The distribution of $U_{e(q)}$}
In this section, for the process $U$ determined by (2.1), (2.2) and (2.6), we first consider the case that $b_1 < b_2$ and derive formulas for $\mathbb P_x\left(U_{e(q)} > y\right)$ with $y > b_2$, $\mathbb P_x\left(U_{e(q)} < y\right)$ with $y < b_1$ and $\mathbb P_x\left(U_{e(q)} > y\right)$ with $b_1 < y < b_2$. After that the corresponding results for the case of $b_1=b_2$ are obtained by letting $b_2 \downarrow b_1$. Before presenting the results, we introduce some notation to end this paragraph. The L\'evy exponent of the process $X$ in (2.1) is given by
\begin{equation}
\psi(z):= \ln\left(\mathbb E\left[e^{z X_1}\right]\right)=\frac{\sigma^2}{2}z^2 + \mu z +\lambda\left(\sum_{i=1}^{m}\frac{p_i\eta_i}{\eta_i-z}+
\sum_{j=1}^{n}\frac{q_j\vartheta_j}{\vartheta_j+z}-1\right).
\end{equation}
For given $q > 0$,  the equation $\psi(z) = q$ has exactly $(m+n+2)$ real roots (denoted by $\beta_{1}$, $\beta_{2}$, $\ldots$,
 $\beta_{m+1}$, $-\gamma_{1}$, $-\gamma_{2}$, $\ldots$, $-\gamma_{n+1}$), which satisfy (see  Lemma 2.1 in Cai (2009) for the proof)
 \begin{equation}
 \begin{split}
&0 < \beta_{1} < \eta_1 < \beta_{2} <\cdots < \eta_m < \beta_{m+1}< \infty,\\
&0< \gamma_{1} < \vartheta_1 < \gamma_{2} < \cdots < \vartheta_n < \gamma_{n+1}< \infty.
\end{split}
\end{equation}
For the following two equations:
\begin{equation}
\tilde{\psi}(z):=\psi(z)-\alpha_1 z = q \ \ and \ \ \hat{\psi}(z):=\psi(z)-\alpha_2 z = q,
\end{equation}
the corresponding roots are denoted respectively by $\tilde{\beta}_{1}$, $\tilde{\beta}_{2}$, $\ldots$, $\tilde{\beta}_{m+1}$, $-\tilde{\gamma}_{1}$, $-\tilde{\gamma}_{2}$, $\ldots$, $-\tilde{\gamma}_{n+1}$ and $\hat{\beta}_{1}$, $\hat{\beta}_{2}$, $\ldots$,
$\hat{\beta}_{m+1}$, $-\hat{\gamma}_{1}$, $-\hat{\gamma}_{2}$, $\ldots$, $-\hat{\gamma}_{n+1}$.

The following three theorems give the expressions for $\mathbb P_x\left(U_{e(q)} > y\right)$ with $y > b_2$, $\mathbb P_x\left(U_{e(q)} < y\right)$ with $y < b_1$ and $\mathbb P_x\left(U_{e(q)} > y\right)$ with $b_1 < y < b_2$, respectively.
We give the details of the proof of Theorem 3.1 in Appendix B and omit the proofs of Theorems 3.2 and 3.3 as they are similar.

\begin{Theorem}
For given $b_1 < b_2$, $\alpha_1$ and $\alpha_2$ in (2.6), the expression of $\mathbb P_x\left(U_{e(q)} > y\right)$ for $y > b_2$ is given as follows.
\begin{equation}
\begin{split}
&\mathbb P_x\left(U_{e(q)}>y\right)=
\begin{small}
\left\{
\begin{aligned}
& \sum_{i=1}^{m+1}E_ie^{\tilde{\beta}_i(x-b_1)}, & x\leq b_1,\\
& \sum_{i=1}^{m+1}F_ie^{\beta_i(x-b_2)}+\sum_{j=1}^{n+1}G_je^{\gamma_j(b_1-x)}, & b_1 \leq  x \leq b_2, \\
& \sum_{i=1}^{m+1}H_ie^{\hat{\beta}_i(x-y)}+\sum_{j=1}^{n+1}M_je^{\hat{\gamma}_j(b_2-x)}, & b_2 \leq x \leq y,\\
&1+\sum_{j=1}^{n+1}N_je^{\hat{\gamma}_j(y-x)}+\sum_{j=1}^{n+1}M_je^{\hat{\gamma}_j(b_2-x)}, & x \geq y,
\end{aligned}
\right.
\end{small}
\end{split}
\end{equation}
where
\begin{small}
\begin{equation}
\begin{aligned}
&H_i=\frac{\prod_{k=1}^{m}(\eta_k-\hat{\beta}_i)\prod_{k=1, k\neq i}^{m+1}\hat{\beta}_k
\prod_{k=1}^{m}(\hat{\beta}_i+\vartheta_k)\prod_{k=1}^{m+1}\hat{\gamma}_k}
{\prod_{k=1}^{m}\eta_k \prod_{k=1, k\neq i}^{m+1}(\hat{\beta}_k-\hat{\beta}_i)\prod_{k=1}^{m}\vartheta_k
\prod_{k=1}^{m+1}(\hat{\beta}_i+\hat{\gamma}_k)}, & 1\leq i\leq m+1, \\
&N_j=-\frac{\prod_{k=1}^{n}(\vartheta_k-\hat{\gamma}_j)
\prod_{k=1, k\neq j}^{n+1}\hat{\gamma}_k
\prod_{k=1}^{m}(\hat{\gamma}_j+\eta_k)
\prod_{k=1}^{m+1}\hat{\beta}_k
}{\prod_{k=1}^{n}\vartheta_k \prod_{k=1, k\neq j}^{n+1}(\hat{\gamma}_k-\hat{\gamma}_j)
\prod_{k=1}^{m}\eta_k \prod_{k=1}^{m+1}(\hat{\gamma}_j+\hat{\beta}_k)},& 1\leq j\leq n+1.
\end{aligned}
\end{equation}
\end{small}
The other constants in (3.4), $E_1, \ldots, E_{m+1}$, $F_1, \ldots, F_{m+1}$, $G_1, \ldots, G_{n+1}$ and $M_1, \ldots, M_{n+1}$, are determined by
\begin{equation}
\left(E_1,\ldots,E_{m+1},F_1,\ldots,F_{m+1},G_1,\ldots,G_{n+1},M_1, \ldots,M_{n+1}\right)Q_1=h,
\end{equation}
with $Q_1$ given by (A.1) in Appendix A and
\begin{equation}
h=\left(0,\ldots,0,h_{m+n+3},\ldots,h_{2m+2n+4}\right),
\end{equation}
where
\begin{equation}
\begin{split}
&h_{m+n+3}=\sum_{i=1}^{m+1}H_ie^{\hat{\beta}_i(b_2-y)}, \ \ h_{m+n+4}=\sum_{i=1}^{m+1}H_i\hat{\beta}_i e^{\hat{\beta}_i(b_2-y)},\\
&h_{m+n+4+k}=\sum_{i=1}^{m+1}\frac{H_i\vartheta_k}{
\vartheta_k+\hat{\beta}_i}e^{\hat{\beta}_i(b_2-y)},\ \ k=1,\ldots, n,\\
&h_{m+2n+4+k}=\sum_{i=1}^{m+1}\frac{H_i\eta_k}{\eta_k-\hat{\beta}_i}e^{\hat{\beta}_i(b_2-y)}, \ \ k=1,\ldots, m.
\end{split}
\end{equation}
\end{Theorem}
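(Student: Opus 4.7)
The strategy is to verify the hypotheses of Lemma 2.1(1) for the piecewise-exponential ansatz displayed in (3.4); by the uniqueness implicit in that lemma, this identifies the ansatz with $\mathbb P_x(U_{e(q)}>y)$. So I would build the ansatz, pin down its coefficients, and then check the required smoothness and growth.

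The ansatz is motivated by the fact that $\Gamma$ in (2.13) is translation invariant on each of the four open regions $(-\infty,b_1)$, $(b_1,b_2)$, $(b_2,y)$, $(y,\infty)$, with local L\'evy exponents $\tilde\psi$, $\psi$, $\hat\psi$, $\hat\psi$ respectively, so that exponentials $e^{\theta x}$ are eigenfunctions of $\Gamma-q$ with eigenvalue zero whenever $\theta$ is a real root of the corresponding characteristic equation. Boundedness on $(-\infty,b_1]$ then keeps only the positive roots $\tilde\beta_i$; boundedness on $[y,\infty)$ keeps only the negative roots $-\hat\gamma_j$; and the constant $1$ on $x\ge y$ is the particular solution absorbing the $-q$ forcing in (2.12). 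Recentering each exponential at the nearest breakpoint is merely a cosmetic normalization that keeps coefficients of order unity.

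I would then pin down $H_i$ and $N_j$ in closed form. These coefficients control the crossing of $y$, where the right-hand side of (2.12) jumps from $qF_1$ to $qF_1-q$; matching the local and nonlocal contributions of $\Gamma F_1$ across $y$ produces rational identities in $\hat\beta_i$, $\hat\gamma_j$, $\eta_k$, $\vartheta_k$, and a partial-fraction expansion whose residues are exactly the products displayed in (3.5) yields the formulas. With $H_i$ and $N_j$ fixed, the remaining $2(m+n+2)$ coefficients $E_i, F_i, G_j, M_j$ are determined by three continuity and three $C^1$-matching conditions at $b_1, b_2, y$, supplemented by the requirements that the convolution $\int F_1(x+z)f_Z(z)dz$ produce no spurious exponential modes when the IDE is imposed on the middle regions. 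These assemble into the linear system (3.6) with matrix $Q_1$ as in Appendix A and right-hand side $h$ given by (3.7)--(3.8), the $H_i$ entering through $h$. Once (3.6) is solved, the boundedness and smoothness hypotheses of Lemma 2.1(1) are manifest and the lemma concludes $F_1(x)=\mathbb P_x(U_{e(q)}>y)$.

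The main obstacle is the bookkeeping behind (3.6): each entry of $Q_1$ records the value of $\int_{-\infty}^{\infty} e^{\theta(x+z-c)}f_Z(z)\,dz$ after the $z$-domain is split at $b_1-x$, $b_2-x$, $y-x$ so that the ansatz's regional form applies on each sub-interval, and one must keep careful track of how the positive-jump and negative-jump pieces of the hyper-exponential density (2.2) interact with each regional exponent. Verifying that $Q_1$ is non-singular, i.e.\ that (3.6) admits a unique solution, is itself a nontrivial structural observation; the argument parallels the one carried out in Zhou and Wu (2015) for the two-region analogue of the same problem, but here the extra breakpoint $b_2$ roughly doubles the length of the calculation.
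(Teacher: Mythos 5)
Your route is not the paper's. The paper does not verify the integro--differential system of Lemma~2.1 for a posited ansatz; it derives the representation \emph{forward}, probabilistically. Setting $J(x)=\mathbb P_x(U_{e(q)}>y)$, it applies the strong Markov property at the passage times of $b_1$ and $b_2$, where $U$ coincides in law with $\tilde Y=X-\alpha_1t$, $X$, or $\hat Y=X-\alpha_2t$, and then invokes the known one- and two-sided exit identities for hyper-exponential jump diffusions (Lemma~B.1) together with the explicit laws of $\hat S_{e(q)}$ and $\hat I_{e(q)}$ at an independent exponential time (Lemma~B.2, i.e.\ the Wiener--Hopf factorization). This is exactly the alternative the authors announce in Remark~2.3: they deliberately avoid solving the equations of Lemma~2.1 directly. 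Lemma~2.1 enters their proof only at the very last step, to justify the $C^1$ smooth-fit equations in (B.45); the remaining rows of the system (3.6) come from computing the transforms $J_{\eta_k}(b_i)$ and $J_{\vartheta_k}(b_i)$ in two different ways and equating, not from killing spurious modes in the convolution.

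Beyond being a different route, your proposal has two concrete gaps. First, the closed forms of $H_i$ and $N_j$ in (3.5) do not follow from ``matching the local and nonlocal contributions of $\Gamma F_1$ across $y$'' alone. In the IDE framework the conditions attached to the region $x>y$ require the coefficients of the non-eigenmode exponentials $e^{-\vartheta_k x}$ to vanish, and those coefficients are integrals of $F_1$ over all of $(-\infty,y)$; they therefore couple $H_i,N_j$ to $E_i,F_i,G_j,M_j$ and do not close into the $(m{+}n{+}2)$-dimensional subsystem whose solution is (3.5). In the paper this decoupling is a \emph{consequence} of the decomposition $J(x)=\hat{\mathbb E}_x[\mathbf 1_{\{\hat Y_{e(q)}>y\}}\mathbf 1_{\{\hat I_{e(q)}>b_2\}}]+\hat{\mathbb E}_x[e^{-q\hat\tau_{b_2}^-}J(\hat Y_{\hat\tau_{b_2}^-})]$, which identifies $H_i$ and $N_j$ with the single-level coefficients $\frac{\hat C_i}{\hat\beta_i}\sum_j\frac{\hat D_j}{\hat\beta_i+\hat\gamma_j}$ and their duals; you would need to supply an equivalent input or solve the full coupled system and extract (3.5) a posteriori. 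Second, the verification strategy requires you to \emph{exhibit} a bounded $C^1$ solution of (2.12) before Lemma~2.1 can identify it with the probability, i.e.\ you must prove the full linear system (continuity, smooth fit, and all spurious-mode cancellations) is solvable. The paper explicitly concedes in Remark~3.1 that it cannot even prove the reduced matrix $Q_1$ is nonsingular; its forward derivation sidesteps this by showing that the probability itself has the stated form and that its coefficients satisfy (3.6), whereas your argument cannot get off the ground without an existence statement you have no means of establishing.
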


\begin{Remark}
Intuitively, the columns of the matrix $Q_1$ in (A.1) are linearly independent. In other words, the matrix $Q_1$ is nonsingular. At present, we cannot find an easy approach to show this fact. However, a large number of numerical calculations, including those in section 5, confirm that $Q_1$ is an invertible matrix.
\end{Remark}

\begin{Theorem}
For given $b_1 < b_2$, $\alpha_1$ and $\alpha_2$ in (2.6), the expression of $\mathbb P_x\left(U_{e(q)} < y\right)$ for $y < b_1$ is given as follows.

\begin{equation}
\mathbb P_x\left(U_{e(q)} < y\right)=
\begin{small}
\left\{
\begin{aligned}
&1 + \sum_{i=1}^{m+1}\tilde{E}_ie^{\tilde{\beta}_i(x-y)}+
\sum_{i=1}^{m+1}\tilde{F}_i e^{\tilde{\beta}_i(x-b_1)}, & x\leq y,\\
& \sum_{i=1}^{m+1}\tilde{F}_ie^{\tilde{\beta}_i(x-b_1)}
+\sum_{j=1}^{n+1}\tilde{G}_je^{\tilde{\gamma}_j(y-x)}, & y \leq  x \leq b_1, \\
& \sum_{j=1}^{n+1}\tilde{H}_je^{\gamma_j(b_1-x)}+\sum_{i=1}^{m+1}\tilde{M}_i e^{\beta_i(x-b_2)}, & b_1 \leq x \leq b_2,\\
&\sum_{j=1}^{n+1}\tilde{N}_je^{\hat{\gamma}_j(b_2-x)}, & x \geq b_2,
\end{aligned}
\right.
\end{small}
\end{equation}
where
\begin{small}
\begin{equation}
\begin{split}
&\tilde{G}_j=\frac{\prod_{k=1}^{n}(\vartheta_k-\tilde{\gamma}_j)
\prod_{k=1, k\neq j}^{n+1}\tilde{\gamma}_k\prod_{k=1}^{m}(\tilde{\gamma}_j+\eta_k)
\prod_{k=1}^{m+1}\tilde{\beta}_k}{\prod_{k=1}^{n}\vartheta_k
\prod_{k=1, k\neq j}^{n+1}(\tilde{\gamma}_k-\tilde{\gamma}_j)
\prod_{k=1}^{m}\eta_k
\prod_{k=1}^{m+1}(\tilde{\gamma}_j+\tilde{\beta}_k)}, \ \ 1\leq j \leq  n+1,\\
&\tilde{E}_i=-\frac{\prod_{k=1}^{m}(\eta_k-\tilde{\beta}_i)
\prod_{k=1, k\neq i}^{m+1}\tilde{\beta}_k
\prod_{k=1}^{n}(\tilde{\beta}_i+\vartheta_k)
\prod_{k=1}^{n+1}\tilde{\gamma}_k}{
\prod_{k=1}^{m}\eta_k
\prod_{k=1, k\neq i}^{m+1}(\tilde{\beta}_k-\tilde{\beta}_i)
\prod_{k=1}^{n}\vartheta_k
\prod_{k=1}^{n+1}(\tilde{\beta}_i+\tilde{\gamma}_k)}, \ \ 1 \leq i \leq m+1,
\end{split}
\end{equation}
\end{small}
and
\begin{equation}
\left(\tilde{F}_1,\ldots,\tilde{F}_{m+1},\tilde{M}_1,\ldots,\tilde{M}_{m+1},\tilde{H}_1,\ldots,\tilde{H}_{n+1}
,\tilde{N}_1, \ldots, \tilde{N}_{n+1}\right)Q_1=\tilde{h},
\end{equation}
with
\begin{equation}
\tilde{h}=\left(\tilde{h}_1, \ldots, \tilde{h}_{2+n+m},0,\ldots,0\right),
\end{equation}
and
\begin{equation}
\begin{split}
&\tilde{h}_{1}=-\sum_{j=1}^{n+1}\tilde{G}_je^{\tilde{\gamma}_j(y-b_1)}, \ \ \tilde{h}_{2}=\sum_{j=1}^{n+1}\tilde{G}_j\tilde{\gamma}_j e^{\tilde{\gamma}_j(y-b_1)}, \\
& \tilde{h}_{2+k}=-\sum_{j=1}^{n+1}\frac{\tilde{G}_j \vartheta_k}{\vartheta_k-\tilde{\gamma}_j}
e^{\tilde{\gamma}_j(y-b_1)}, \ \ \ \ \ k=1,\ldots, n, \\
&\tilde{h}_{2+n+k}=-\sum_{j=1}^{n+1}\frac{\tilde{G}_j\eta_k}{
\eta_k+\tilde{\gamma}_j}e^
{\tilde{\gamma}_j(y-b_1)},\ \ k=1,\ldots, m.
\end{split}
\end{equation}
\end{Theorem}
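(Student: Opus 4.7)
The plan is to verify, via Lemma~2.1(2), that the piecewise function $F_2$ defined by~(3.9) coincides with $\mathbb{P}_x(U_{e(q)}<y)$. Concretely, one checks that $F_2$ is bounded and continuous on $\mathbb{R}$, twice continuously differentiable off $\{b_1,b_2,y\}$, $C^1$ at those three points, and satisfies the PIDE $\Gamma F_2=qF_2$ on $x>y$ (away from $b_1,b_2$) and $\Gamma F_2=qF_2-q$ on $x<y$. Boundedness at the two ends is immediate: every exponent $\tilde\beta_i$ appearing in the first piece is positive, so $F_2(x)\to 1$ as $x\to-\infty$; every exponent $\hat\gamma_j$ in the last piece is positive, so $F_2(x)\to 0$ as $x\to+\infty$.

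The shape of the ansatz is dictated by the local drift. On each open region, solutions of the homogeneous PIDE $\Gamma u=qu$ are spanned by exponentials $e^{\rho x}$ whose exponents solve the appropriate characteristic equation: $\tilde\psi(\rho)=q$ for $x<b_1$, $\psi(\rho)=q$ for $b_1<x<b_2$, and $\hat\psi(\rho)=q$ for $x>b_2$, giving roots $\tilde\beta_i,-\tilde\gamma_j$, $\beta_i,-\gamma_j$, and $\hat\beta_i,-\hat\gamma_j$ respectively. The constant $1$ is a particular solution of $\Gamma u=qu-q$ on $x<y$ since $\Gamma 1\equiv 0$, which accounts for the leading ``$1+$'' in the first piece. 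After imposing boundedness (killing the $-\tilde\gamma_j$ modes in $x\leq y$ and the $\hat\beta_i$ modes in $x\geq b_2$) and peeling off the explicitly given coefficients $\tilde E_i,\tilde G_j$, the number of free coefficients left is $2(m+n+2)$, which matches the dimension of the linear system~(3.11).

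The explicit formulas~(3.10) for $\tilde E_i,\tilde G_j$ are chosen to absorb the discontinuity introduced by the particular solution $1$ across $x=y$; they are the Vandermonde-type residues one obtains from partial-fraction decomposition of a rational function built out of $\tilde\psi(z)-q$ and the factors $\prod_k(\eta_k-z)$, $\prod_k(\vartheta_k+z)$, and can be verified by a direct residue computation. The remaining coefficients $\tilde F_i,\tilde M_i,\tilde H_j,\tilde N_j$ are then pinned down by matching at the three interfaces $y,b_1,b_2$: three continuity and three $C^1$ conditions give six equations, and the remaining $2(m+n-1)$ arise from requiring that the cross-region convolution $\lambda\int F_2(x+z)f_Z(z)\,dz$ produces no spurious boundary exponentials of the form $e^{-\eta_k(b-x)}$ or $e^{\vartheta_k(b-x)}$ in each open region. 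Rate-by-rate and interface-by-interface cancellation of these boundary terms is precisely what the matrix $Q_1$ of~(A.1) encodes.

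The main obstacle is the bookkeeping of these cross-region convolutions and their reorganisation into the matrix form~(3.11). Structurally the work is identical to the argument given in Appendix~B for Theorem~3.1, with the roles of $y,b_1,b_2$ and of the positive/negative jump sides rearranged since here $y<b_1<b_2$ rather than $b_1<b_2<y$. My strategy is therefore to adapt that proof, tracking carefully which family of roots governs each region and which residues absorb the particular solution. Once the system is set up, invertibility of $Q_1$ (asserted in Remark~3.1) yields unique coefficients, and Lemma~2.1(2) then identifies the resulting $F_2$ with $\mathbb{P}_x(U_{e(q)}<y)$.
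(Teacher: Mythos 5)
Your route is genuinely different from the paper's. You propose to \emph{verify} the ansatz (3.9) through Lemma 2.1(2): check boundedness, smooth fit at $y,b_1,b_2$, and the PIDE $\Gamma F_2=qF_2$ (resp.\ $qF_2-q$) region by region, with the linear system arising from the cancellation of the spurious exponentials $e^{\eta_k x}$, $e^{-\vartheta_k x}$ generated by the cross-region convolutions. Your dimension count is right: the full verification system has $3m+3n+6$ conditions for $3m+3n+6$ free coefficients, and pre-solving the $m+n+2$ conditions attached to the interface $y$ (which is what the explicit $\tilde E_i,\tilde G_j$ do) leaves exactly the $2(m+n+2)$-dimensional system (3.11). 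The paper instead proceeds probabilistically (Appendix B, mutatis mutandis): it applies the strong Markov property at the first-passage times of $b_1$ and $b_2$, uses the one- and two-sided exit identities of Lemma B.1 and the Wiener--Hopf factorization of Lemma B.2 to \emph{derive} the piecewise form, reads off $\tilde E_i,\tilde G_j$ directly as the factorization coefficients of $\tilde Y_{e(q)}=\tilde S_{e(q)}+\tilde I_{e(q)}$ (simplified via the identities (B.15)--(B.17)), and assembles the system from equating the two available expressions for the transforms $J_{\vartheta_k}(b_1),J_{\eta_k}(b_1),J_{\vartheta_k}(b_2),J_{\eta_k}(b_2)$, with Lemma 2.1 invoked only for the four smooth-fit equations. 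What your approach buys is conceptual economy (one verification lemma, no fluctuation theory); what it costs is that the two decisive identifications are asserted rather than carried out: that the formulas (3.10) are precisely the residues that absorb the unit jump across $x=y$ while killing the spurious terms there, and that the remaining cancellation conditions reorganize into $(\tilde F,\tilde M,\tilde H,\tilde N)Q_1=\tilde h$ with the specific $Q_1$ of (A.1) and $\tilde h$ of (3.12)--(3.13). These are exactly the computations the paper's probabilistic machinery delivers for free (e.g.\ (3.10) is immediate from Lemma B.2 applied to $\tilde\psi$), so to make your argument complete you would still have to perform the convolution bookkeeping explicitly; as written it is a sound plan rather than a proof.
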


\begin{Theorem}
For given $b_1 < y < b_2$, $\alpha_1$ and $\alpha_2$ in (2.6), we have the following results.
\begin{equation}
\mathbb P_x\left(U_{e(q)} > y\right)=
\begin{small}
\left\{\begin{aligned}
&\sum_{i=1}^{m+1}\hat{E}_ie^{\tilde{\beta}_i(x-b_1)}, & x \leq b_1,\\
&\sum_{i=1}^{m+1}\left(\hat{U}_i+\hat{H}_ie^{\beta_i(y-b_2)}\right)e^{\beta_i(x-y)}
+\sum_{j=1}^{n+1}\hat{G}_je^{\gamma_j(b_1-x)}, & b_1 \leq  x \leq y, \\
&1+\sum_{i=1}^{m+1}\hat{H}_i e^{\beta_i (x-b_2)}+\sum_{j=1}^{n+1}\left(\hat{V}_j+\hat{G}_je^{\gamma_j(b_1-y)}\right) e^{\gamma_j(y-x)}
, & y \leq x \leq b_2,\\
&1+\sum_{j=1}^{n+1}\hat{N}_j e^{\hat{\gamma}_j(b_2-x)}, & x \geq b_2,
\end{aligned}
\right.
\end{small}
\end{equation}
where
\begin{small}
\begin{equation}
\begin{aligned}
&\hat{V}_j=-\frac{\prod_{k=1}^{n}(\vartheta_k-\gamma_j)
\prod_{k=1, k\neq j}^{n+1}\gamma_k\prod_{k=1}^{m}(\gamma_j+\eta_k)
\prod_{k=1}^{m+1}\beta_k}{\prod_{k=1}^{n}\vartheta_k
\prod_{k=1, k\neq j}^{n+1}(\gamma_k-\gamma_j)
\prod_{k=1}^{m}\eta_k
\prod_{k=1}^{m+1}(\gamma_j+\beta_k)}, & 1\leq j\leq n+1,\\
&\hat{U}_i=\frac{\prod_{k=1}^{m}(\eta_k-\beta_i)
\prod_{k=1, k\neq i}^{m+1}\beta_k
\prod_{k=1}^{n}(\beta_i+\vartheta_k)
\prod_{k=1}^{n+1}\gamma_k}{
\prod_{k=1}^{m}\eta_k
\prod_{k=1, k\neq i}^{m+1}(\beta_k-\beta_i)
\prod_{k=1}^{n}\vartheta_k
\prod_{k=1}^{n+1}(\beta_i+\gamma_k)}, & 1 \leq i \leq m+1,
\end{aligned}
\end{equation}
\end{small}
and
\begin{equation}
\left(\hat{E}_1,\ldots,\hat{E}_{m+1},\hat{H}_1,\ldots,\hat{H}_{m+1},\hat{G}_1,\ldots,\hat{G}_{n+1}
,\hat{N}_1, \ldots, \hat{N}_{n+1}\right)Q_1=\hat{h}.
\end{equation}
Here, the vector $\hat{h}$ in (3.16) is given by
\begin{equation}
\hat{h}=\left(\hat{h}_1,\ldots, \hat{h}_{2m+2n+4}\right),
\end{equation}
where
\begin{equation}
\begin{split}
&\hat{h}_{1}=\sum_{i=1}^{m+1}\hat{U}_ie^{\beta_i(b_1-y)}, \ \ \hat{h}_{2}=\sum_{i=1}^{m+1}\hat{U}_i\beta_ie^{\beta_i(b_1-y)},\\
&\hat{h}_{2+k}=\sum_{i=1}^{m+1}\frac{\hat{U}_i\vartheta_k}{\vartheta_k+\beta_i}
e^{\beta_i(b_1-y)}, \ \ k=1,\ldots, n, \\
&\hat{h}_{2+n+k}=\sum_{i=1}^{m+1}\frac{\hat{U}_i\eta_k}{\eta_k-\beta_i}
e^{\beta_i(b_1-y)},\ \ k=1,\ldots,m,
\end{split}
\end{equation}
and
\begin{equation}
\begin{split}
&\hat{h}_{m+n+3}=-\sum_{j=1}^{n+1}\hat{V}_j e^{\gamma_j(y-b_2)}, \ \ \hat{h}_{m+n+4}=\sum_{j=1}^{n+1}\hat{V}_j\gamma_je^{\gamma_j(y-b_2)},\\
&\hat{h}_{m+n+4+k}=-\sum_{j=1}^{n+1}\frac{\hat{V}_j\vartheta_k}{\vartheta_k-\gamma_j}
e^{\gamma_j(y-b_2)}, \ \ k=1,\ldots, n,\\
&\tilde{h}_{m+2n+4+k}=-\sum_{j=1}^{n+1}\frac{\hat{V}_j\eta_k}{\eta_k+\gamma_j}
e^{\gamma_j(y-b_2)},\ \ k=1,\ldots,m.
\end{split}
\end{equation}
\end{Theorem}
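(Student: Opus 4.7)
The plan is to invoke Lemma 2.1(3): the function $F_3(x):=\mathbb{P}_x(U_{e(q)}>y)$ is the unique bounded, $C^2$-off-$\{b_1,y,b_2\}$ solution of the integro-differential equation (2.19) subject to the $C^1$-matching conditions (2.20). Because the drift of $U$ is piecewise constant on the four regions $(-\infty,b_1]$, $[b_1,y]$, $[y,b_2]$, $[b_2,\infty)$, on each of them the operator $\Gamma-q$ is a fixed integro-differential operator whose bounded exponential modes $e^{\rho(x-c)}$ come from the real roots of $\tilde{\psi}(z)=q$, $\psi(z)=q$, $\psi(z)=q$ and $\hat{\psi}(z)=q$, respectively. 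Boundedness at $-\infty$ retains only the positive roots $\tilde{\beta}_i$ on $(-\infty,b_1]$; boundedness at $+\infty$ retains only the negative roots $-\hat{\gamma}_j$ (together with the particular solution $1$ absorbing the inhomogeneous $-q$) on $[b_2,\infty)$; and on the middle strip all $m+n+2$ real roots $\beta_1,\ldots,\beta_{m+1},-\gamma_1,\ldots,-\gamma_{n+1}$ of $\psi(z)=q$ occur, with the constant $1$ added only on the sub-region $[y,b_2]$. This reasoning produces precisely the ansatz (3.14).

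Next I would identify the explicit constants $\hat{U}_i,\hat{V}_j$ in (3.15) as the residue-type coefficients appearing in Cai (2009)'s formula for $\mathbb{P}_x(X_{e(q)}>y)$ for the unperturbed hyper-exponential jump diffusion $X$: $\hat{U}_i$ is the weight of $e^{\beta_i(x-y)}$ on $\{x<y\}$ and $\hat{V}_j$ that of $e^{\gamma_j(y-x)}$ on $\{x>y\}$ in that classical expression. With these choices, the pieces $\sum_i\hat{U}_ie^{\beta_i(x-y)}\mathbf{1}_{\{b_1<x<y\}}+\sum_j\hat{V}_je^{\gamma_j(y-x)}\mathbf{1}_{\{y<x<b_2\}}$ handle continuity, $C^1$-matching and the matching of the integral term $\lambda\int F_3(x+z)f_Z(z)\,dz$ across $y$, all of which follow from the partial-fraction identity for the rational function whose poles are $\{\beta_i\}\cup\{-\gamma_j\}$ and whose zeros contain $\{\eta_k\}\cup\{-\vartheta_k\}$; in particular, the continuity at $y$ reduces to $\sum_i\hat{U}_i-\sum_j\hat{V}_j=1$. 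The remaining unknowns $\hat{E}_i,\hat{H}_i,\hat{G}_j,\hat{N}_j$ encode the ``reflections'' induced by the drift changes at the interfaces $b_1$ and $b_2$.

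These $4m+4n+4$ unknowns are pinned down by $4m+4n+4$ linear relations: continuity and $C^1$-continuity of $F_3$ at $b_1$ and at $b_2$, plus the $2(m+n)$ conditions needed to force $\lambda\int F_3(x+z)f_Z(z)\,dz$ to reassemble into exponentials of the correct form on each side of each interface---equivalently, the vanishing of the residues at $z=\eta_k$ and $z=-\vartheta_k$ in the associated Laplace-transform identity. Assembling these conditions gives the system (3.16) with coefficient matrix $Q_1$ from Appendix A (the same one already used in Theorem 3.1), whose right-hand side $\hat{h}$ in (3.17)--(3.19) is exactly the forcing produced by evaluating the $\hat{U}_i$-terms at $b_1$ and the $\hat{V}_j$-terms at $b_2$; this is what dictates the block structure of $\hat{h}$ with two nonzero slabs indexed respectively by the interface $b_1$ (rows $1,\ldots,m+n+2$) and the interface $b_2$ (rows $m+n+3,\ldots,2m+2n+4$).

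The main obstacle will be the algebraic verification that the ansatz with the explicit residue-coefficients $\hat{U}_i,\hat{V}_j$ of (3.15) indeed produces the correct local behaviour at $y$, since this hinges on a partial-fraction identity whose cancellations have to be tracked through many product terms. A secondary subtlety is that the solvability of (3.16) ultimately rests on the invertibility of $Q_1$, which is only corroborated numerically and is explicitly left open in Remark 3.1; accordingly the conclusion of Theorem 3.3 is implicitly conditional on $Q_1$ being nonsingular in the relevant parameter regime.
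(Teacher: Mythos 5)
Your strategy is genuinely different from the paper's. You propose to treat Theorem 3.3 as a verification problem: build the piecewise-exponential ansatz from the roots of $\tilde{\psi}=q$, $\psi=q$, $\hat{\psi}=q$, check the conditions of Lemma 2.1(3), and conclude. The paper does the opposite: Remark 2.2 explicitly states that the authors do \emph{not} obtain the distribution by solving the equations of Lemma 2.1 directly. Instead (Appendix B, written out for Theorem 3.1 and declared ``similar'' for Theorem 3.3), they derive the representation probabilistically, region by region, from the strong Markov property at the passage times $\kappa_{b_1}^{\pm}$, $\kappa_{b_2}^{\pm}$, using the one- and two-sided exit identities of Lemma B.1 and the Wiener--Hopf factorization of Lemma B.2; the linear system then arises from computing the overlap quantities $J_{\eta_k}(\cdot)$, $J_{\vartheta_k}(\cdot)$ in two different ways, with Lemma 2.1 invoked only at the very end to supply the smooth-fit conditions. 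Your identification of $\hat{U}_i,\hat{V}_j$ with the Wiener--Hopf coefficients of $\mathbb{P}_x\left(X_{e(q)}>y\right)$ for the unperturbed process is correct and consistent with (3.15).

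As a proof, however, your plan has concrete gaps. First, essentially all of the substance is deferred: you never verify that, with $\hat{U}_i,\hat{V}_j$ as in (3.15), the continuity, smooth-fit and residue conditions at the interior interface $y$ actually hold --- these are precisely the identities (3.20), and you only mention the first of them ($\sum_i\hat{U}_i-\sum_j\hat{V}_j=1$); nor do you carry out the bookkeeping showing that the remaining conditions at $b_1$ and $b_2$ assemble into exactly $(\hat{E},\hat{H},\hat{G},\hat{N})Q_1=\hat{h}$ with the $Q_1$ of (A.1). Second, there is a logical asymmetry you should make explicit: Lemma 2.1(3) is an ``if a solution exists, then it equals the probability'' statement, so the verification route cannot even start unless the linear system (3.16) is shown to be solvable; your proof is therefore conditional on the invertibility of $Q_1$, which the paper concedes (Remark 3.1) it cannot establish. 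The paper's probabilistic derivation sidesteps this: it shows that the true probability has the form (3.14) and that its (existing) coefficients satisfy (3.16), so consistency of the system is automatic and invertibility is needed only to determine the coefficients uniquely from it. Finally, a small slip: the unknowns $\hat{E}_i,\hat{H}_i,\hat{G}_j,\hat{N}_j$ number $2m+2n+4$, not $4m+4n+4$, matching the size of $Q_1$.
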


\begin{Remark}
From the derivation of Theorem 3.3, we have
\begin{equation}
\begin{split}
&\sum_{i=1}^{m+1}\hat{U}_i-\sum_{j=1}^{n+1}\hat{V}_j-1=0, \\ &\sum_{i=1}^{m+1}\hat{U}_i\beta_i+\sum_{j=1}^{n+1}\hat{V}_j\gamma_j=0,\\
&\sum_{i=1}^{m+1}\frac{\hat{U}_i\vartheta_k}{\vartheta_k+\beta_i}-\sum_{j=1}^{n+1}\frac{\hat{V}_j\vartheta_k}
{\vartheta_k-\gamma_j}-1=0,\ \ k=1, 2,\ldots, n, \\ &\sum_{i=1}^{m+1}\frac{\hat{U}_i\eta_k}{\eta_k-\beta_i}-\sum_{j=1}^{n+1}\frac{\hat{V}_j\eta_k}
{\eta_k+\gamma_j}-1=0, \ \ k=1,2,\ldots, m.
\end{split}
\end{equation}
We remark that formula (3.15) is obtained easily via solving (3.20) and that the equations in (3.20) will be used in proof of the following Corollary 3.1.
\end{Remark}

In order to obtain the distribution of $U_{e(q)}$, the above three theorems are not enough as they do not give the values of the two probabilities: $\mathbb P_x\left(U_{e(q)}=b_2\right)$ and $\mathbb P_x\left(U_{e(q)}=b_1\right)$. In Corollary 3.1 (whose proof is given in Appendix C), we draw the conclusion that both of them are equal to zero. We remark that this result is not surprising because we have assumed that $\sigma > 0$ in this paper.
\begin{Corollary}
For given $b_1 < b_2$, $\alpha_1$ and $\alpha_2$ in (2.6), we have
\begin{equation}
\mathbb P_x\left(U_{e(q)}=b_2\right)=\mathbb P_x\left(U_{e(q)}=b_1\right)=0.
\end{equation}
\end{Corollary}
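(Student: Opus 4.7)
My approach is probabilistic rather than algebraic: I would exploit the nondegenerate Brownian component ($\sigma>0$) to show that $U_t$ is non-atomic at each fixed time $t>0$, and then integrate against the exponential clock. Since $e(q)$ is independent of $U$ and has density $qe^{-qt}$ on $(0,\infty)$, Fubini yields
\[
\mathbb P_x\bigl(U_{e(q)}=b_i\bigr)=\int_{0}^{\infty}qe^{-qt}\,\mathbb P_x\bigl(U_{t}=b_i\bigr)\,dt,\qquad i=1,2,
\]
so the corollary reduces to the pointwise statement $\mathbb P_x(U_t=b_i)=0$ for every $t>0$.

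To prove the pointwise statement, fix $t>0$ and condition on the entire trajectory of the compound Poisson process $J_s:=\sum_{k=1}^{N_s}Z_k$, which is independent of $W$. Setting $Y_s:=U_s-J_s$, the defining SDE (2.6) rewrites as
\[
dY_s=b(Y_s+J_s)\,ds+\sigma\,dW_s,\qquad Y_0=x,
\]
where $b(u):=\mu-\alpha_1\mathbf{1}_{\{u<b_1\}}-\alpha_2\mathbf{1}_{\{u\ge b_2\}}$ is a bounded Borel function. Because this drift is bounded, Novikov's condition holds trivially for the Girsanov exponential associated with $\theta_s:=\sigma^{-1}b(Y_s+J_s)$, producing an equivalent measure $\mathbb Q$ on $\mathcal F_t$ (conditional on $J$) under which $\widetilde W_\cdot:=W_\cdot+\int_0^\cdot\theta_s\,ds$ is a Brownian motion and hence $Y_t=x+\sigma\widetilde W_t$. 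Thus $Y_t$ is Gaussian, in particular non-atomic, under $\mathbb Q$; equivalence of measures transfers non-atomicity to $\mathbb P_x(\cdot\mid J)$. Shifting by the $J$-measurable quantity $J_t$ gives $\mathbb P_x(U_t=b_i\mid J)=0$ almost surely, and taking expectations closes the argument.

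The main obstacle is the pathwise conditioning on $J$ and the measurability bookkeeping needed to apply Girsanov's theorem cleanly to the SDE with discontinuous (but bounded) drift; these steps are standard but warrant care. An alternative route, and apparently the one hinted at in Remark 3.3, would be to take one-sided limits $y\uparrow b_i$ and $y\downarrow b_i$ in the explicit formulas of Theorems 3.1, 3.2, and 3.3 and verify that they coincide; the matching reduces to the algebraic identities (3.20) for $\hat U_i,\hat V_j$ together with the analogous identities satisfied by $H_i,N_j,\tilde E_i,\tilde G_j$, which one checks by partial-fraction expansion of the relevant rational functions. Either route delivers the stated result, and I would prefer the probabilistic one for its brevity and insight into why $\sigma>0$ is the essential hypothesis.
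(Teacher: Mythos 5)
Your probabilistic argument is correct in substance, but it is genuinely different from what the paper does. The paper's proof (Appendix C) is purely algebraic: it writes $\mathbb P_x(U_{e(q)}=b_2)$ as the difference of the two one-sided limits $\lim_{y\uparrow b_2}\mathbb P_x(U_{e(q)}>y)$ (from Theorem 3.3) and $\lim_{y\downarrow b_2}\mathbb P_x(U_{e(q)}>y)$ (from Theorem 3.1), and then verifies that the difference vanishes identically in $x$ by checking that the corresponding coefficient vectors satisfy the same linear system $(\cdot)Q_1=h^0-\hat h^0$, using the partial-fraction identities (3.20), (B.37) and (C.3) --- exactly the ``alternative route'' you sketch in your last paragraph (though it lives in Appendix C, not Remark 3.3, which concerns occupation times). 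Your main route --- reduce to $\mathbb P_x(U_t=b_i)=0$ for fixed $t$ via Fubini and independence of $e(q)$, condition on the path of the compound Poisson part, and use Girsanov with the bounded measurable drift $b(u)=\mu-\alpha_1\mathbf 1_{\{u<b_1\}}-\alpha_2\mathbf 1_{\{u\ge b_2\}}$ to see that the continuous part is Gaussian under an equivalent measure --- is sound: Novikov holds trivially for a bounded drift, equivalence of measures preserves null sets, and the shift by the $J$-measurable quantity $J_t$ is harmless. It proves more than the corollary (non-atomicity of $U_t$ at every level, for every $t>0$) and isolates $\sigma>0$ as the operative hypothesis. What it does not buy you is the consistency check that the paper's computation provides: verifying (C.4) simultaneously confirms that the explicit coefficient formulas of Theorems 3.1 and 3.3 glue correctly at $b_2$, which matters for the later applications in Section 4. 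The only points needing care in your version are the ones you already flag --- the regular-conditional-probability bookkeeping when freezing $J$, and the fact that the strong solution is a measurable functional of $(W,J)$ so that conditioning on $J$ yields the frozen-path SDE --- both standard.
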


\begin{Remark}
For $y \in \mathbb R$, applying integration by part yields
\begin{equation}
\begin{split}
&\int_{0}^{\infty}e^{-qT}\mathbb E_x\left[\int_{0}^{T}\rm{\bf{1}}_{\{U_t \geq  y\}}dt\right]dT=\frac{\mathbb P_x\left(U_{e(q)} \geq y\right)}{q^2}, \\
&\int_{0}^{\infty}e^{-qT}\mathbb E_x\left[\int_{0}^{T}\rm{\bf{1}}_{\{U_t <  y\}}dt\right]dT=\frac{\mathbb P_x\left(U_{e(q)} < y\right)}{q^2},
\end{split}
\end{equation}
which can be computed from Theorems 3.1, 3.2, 3.3 and Corollary 3.1. Particularly, note that the sum of $\mathbb E_x\left[\int_{0}^{T}\rm{\bf{1}}_{\{U_t \geq  b_2\}}dt\right]$ and $\mathbb E_x\left[\int_{0}^{T}\rm{\bf{1}}_{\{U_t <  b_1\}}dt\right]$ is the total time of deducting fees for a VA with GMMB rider under the multi-layer expense strategy (1.2).
\end{Remark}

For fixed $b_1$, if we let $b_2 \downarrow b_1$ in Theorems 3.1 and 3.2, then we can derive formulas for the distribution function of $U_{e(q)}$ with $b_1 = b_2$ in the following Corollary 3.2. In Appendix C, we give  the details of deriving Corollary 3.2.
\begin{Corollary}
For given $b_1 = b_2$, $\alpha_1$ and $\alpha_2$ in (2.6), we have the following results.

(i) For $y > b_1$,
\begin{equation}
\begin{split}
&\mathbb P_x\left(U_{e(q)} > y\right)=
\begin{small}
\left\{
\begin{aligned}
& \sum_{i=1}^{m+1}E^1_ie^{\tilde{\beta}_i(x-b_1)}, & x\leq b_1,\\
& \sum_{i=1}^{m+1}H_ie^{\hat{\beta}_i(x-y)}+\sum_{j=1}^{n+1}M^1_je^{\hat{\gamma}_j(b_1-x)}, & b_1 \leq x \leq y,\\
&1+\sum_{j=1}^{n+1}N_je^{\hat{\gamma}_j(y-x)}+\sum_{j=1}^{n+1}M^1_je^{\hat{\gamma}_j(b_1-x)}, & x \geq y,
\end{aligned}
\right.
\end{small}
\end{split}
\end{equation}
where $H_i$ and $N_j$ are given by (3.5) and
\begin{equation}
\begin{split}
&M^1_{j}
=\frac{\prod_{k=1}^{m}(\hat{\gamma}_{j}+\eta_k)\prod_{k=1}^{n}
(\vartheta_k-\hat{\gamma}_{j})\prod_{k=1}^{m+1}\hat{\beta}_{k}\prod_{k=1}^{n+1}\hat{\gamma}_{k}}
{\prod_{k=1}^{m}\eta_k
\prod_{k=1}^{n}\vartheta_k\prod_{k=1}^{m+1}(\hat{\gamma}_{j}+\tilde{\beta}_{k})\prod_{k=1,k\neq j}^{n+1}
(\hat{\gamma}_{k}-\hat{\gamma}_{j})}\\
&\ \ \ \ \ \ \ \ \ \ \ \ \times\sum_{i=1}^{m+1}\frac{\prod_{k=1}^{m+1}(\hat{\beta}_{i}-\tilde{\beta}_{k})
e^{\hat{\beta}_{i}(b_1-y)}}
{\hat{\beta}_{i}(\hat{\beta}_{i}
+\hat{\gamma}_{j})\prod_{k=1,k\neq i}^{m+1}(\hat{\beta}_{i}-\hat{\beta}_{k})}, \ \ j=1, \ldots, n+1,
\end{split}
\end{equation}
and
\begin{equation}
\begin{split}
&E^1_i
=\frac{\prod_{k=1}^{n+1}\hat{\beta}_{k}\prod_{k=1}^{n+1}\hat{\gamma}_{k}
\prod_{k=1}^{m}(\tilde{\beta}_{i}-\eta_k)\prod_{k=1}^{n}(\tilde{\beta}_{i}+\vartheta_k)}
{\prod_{k=1}^{m}\eta_k
\prod_{k=1}^{n}\vartheta_k\prod_{k=1,k\neq i}^{m+1}(\tilde{\beta}_{i}-\tilde{\beta}_{k})\prod_{k=1}^{n+1}(\tilde{\beta}_{i}+\hat{\gamma}_{k})}\\
&\ \ \ \ \ \times \sum_{j=1}^{m+1}\frac{\prod_{k=1,k\neq i}^{m+1}(
\hat{\beta}_{j}-\tilde{\beta}_{k})}{\hat{\beta}_{j}\prod_{k=1,k\neq j}^{m+1}(\hat{\beta}_{j}-\hat{\beta}_{k})}e^{\hat{\beta}_{j}(b_1-y)}, \ \ i=1, \ldots, m+1.
\end{split}
\end{equation}

(ii) For $y < b_1$,
\begin{equation}
\mathbb P_x\left(U_{e(q)} < y\right)=
\begin{small}
\left\{
\begin{aligned}
&1 + \sum_{i=1}^{m+1}\tilde{E}_ie^{\tilde{\beta}_i(x-y)}+
\sum_{i=1}^{m+1}\tilde{F}^1_i e^{\tilde{\beta}_i(x-b_1)}, & x\leq y,\\
& \sum_{i=1}^{m+1}\tilde{F}^1_ie^{\tilde{\beta}_i(x-b_1)}
+\sum_{j=1}^{n+1}\tilde{G}_je^{\tilde{\gamma}_j(y-x)}, & y \leq  x \leq b_1, \\
&\sum_{j=1}^{n+1}\tilde{N}^1_je^{\hat{\gamma}_j(b_1-x)}, & x \geq b_1,
\end{aligned}
\right.
\end{small}
\end{equation}
where $\tilde{E}_i$ and $\tilde{G}_j$ are given by (3.10) and
\begin{equation}
\begin{split}
&\tilde{N}^1_{j}
=
\frac{\prod_{k=1}^{m+1}\tilde{\beta}_{k}\prod_{k=1}^{n+1}
\tilde{\gamma}_{k}\prod_{k=1}^{m}(\hat{\gamma}_{j}+\eta_k)\prod_{k=1}^{n}
(\vartheta_k-\hat{\gamma}_{j})}{\prod_{k=1}^{m}\eta_k\prod_{k=1}^{n}\vartheta_k
\prod_{k=1}^{m+1}(\hat{\gamma}_{j}+\tilde{\beta}_{k})\prod_{k=1,k\neq j}^{n+1}
(\hat{\gamma}_{k}-\hat{\gamma}_{j})}\\
&\ \ \ \ \ \ \ \ \ \ \ \times\sum_{i=1}^{n+1}\frac{\prod_{k=1,k\neq j}^{n+1}(\hat{\gamma}_{k}-\tilde{\gamma}_{i})}{\tilde{\gamma}_{i}\prod_{k=1,k \neq i}^{n+1}
(\tilde{\gamma}_{k}-\tilde{\gamma}_{i})}
e^{\tilde{\gamma}_{i}(y-b_1)}, \ \ j=1,\ldots, n+1,
\end{split}
\end{equation}

\begin{equation}
\begin{split}
&\tilde{F}^1_i
=\frac{\prod_{k=1}^{m+1}\tilde{\beta}_{k}\prod_{k=1}^{n+1}
\tilde{\gamma}_{k}\prod_{k=1}^{m}(\tilde{\beta}_{i}-\eta_k)\prod_{k=1}^{n}
(\tilde{\beta}_{i}+\vartheta_k)}{\prod_{k=1}^{m}\eta_k\prod_{k=1}^{n}\vartheta_k \prod_{k=1,k\neq i}^{m+1}(\tilde{\beta}_{i}-\tilde{\beta}_{k})\prod_{k=1}^{n+1}
(\tilde{\beta}_{i}+\hat{\gamma}_{k})}\\
&\ \ \ \ \ \ \ \times\sum_{j=1}^{n+1}\frac{\prod_{k=1}^{n+1}(\hat{\gamma}_{k}-\tilde{\gamma}_{j})e^{\tilde{\gamma}_{j}(y-b_1)}}
{-\tilde{\gamma}_{j}(\tilde{\beta}_{i}+\tilde{\gamma}_{j})\prod_{k=1,k\neq j}^{n+1}
(\tilde{\gamma}_{k}-\tilde{\gamma}_{j})}, \ \ i=1,\ldots, m+1.
\end{split}
\end{equation}

(iii) For $x \in \mathbb R$,
\begin{equation}
\mathbb P_x\left(U_{e(q)}=b_1\right)=0.
\end{equation}
\end{Corollary}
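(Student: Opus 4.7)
The plan is to derive parts (i) and (ii) by passing to the limit $b_2 \downarrow b_1$ in Theorems 3.1 and 3.2 respectively, and to settle part (iii) by a short probabilistic argument that leverages $\sigma > 0$.

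For part (i), I start from (3.4) of Theorem 3.1 with $b_1 < b_2$ and let $b_2 \downarrow b_1$. The middle region $[b_1,b_2]$ collapses, so the second piece of (3.4) disappears and the remaining three pieces take the form of (3.23) with $b_2$ replaced by $b_1$. The constants $H_i$ and $N_j$ in (3.5) do not depend on $b_1, b_2$ and carry over unchanged, so the task reduces to identifying the limits $E^1_i = \lim_{b_2 \downarrow b_1} E_i$ and $M^1_j = \lim_{b_2 \downarrow b_1} M_j$. By continuity of the solution of (2.6) in the parameter $b_2$, these limits exist and the resulting three-piece function still satisfies the $b_1 = b_2$ analogue of Lemma 2.1(1). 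The limit coefficients are then pinned down by the reduced linear system obtained from the $C^1$ matching conditions at $x = b_1$ and $x = y$, together with the jump-kernel identities underlying Lemma 2.1. Solving this reduced system via the Lagrange/partial-fraction identities in the roots $\tilde\beta_k, \hat\beta_k, \hat\gamma_k$ produces the closed forms (3.24) and (3.25). Part (ii) is handled identically by applying the same procedure to Theorem 3.2: $\tilde E_i$ and $\tilde G_j$ in (3.10) survive the limit, and $\tilde F^1_i, \tilde N^1_j$ emerge from the analogous reduced system as (3.29) and (3.28).

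For part (iii), the cleanest argument is direct: since $\sigma > 0$, the Brownian component in (2.6) makes the law of $U_t$ under $\mathbb P_x$ absolutely continuous, hence atomless, for every $t > 0$; this is a standard feature of jump diffusions with a non-degenerate Gaussian part. Integrating over $t$ against the exponential density of $e(q)$ gives $\mathbb P_x(U_{e(q)} = b_1) = 0$.

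The main obstacle will be the algebraic bookkeeping in parts (i) and (ii). As $b_2 \downarrow b_1$ the matrix $Q_1$ becomes rank-deficient in a controlled way: pairs of continuity conditions previously enforced separately at $b_1$ and at $b_2$ coalesce into a single pair at $b_1$, so several rows of (3.6) merge. One must verify that the merged system still uniquely determines $(E^1_i, M^1_j)$ and $(\tilde F^1_i, \tilde N^1_j)$, and then reorganize the explicit solution into the product formulas (3.24), (3.25), (3.28), (3.29). Each manipulation is a routine partial-fraction identity over the roots of (3.2) and (3.3), but tracking the product ranges and the sign patterns produced by collapsing the $b_1$ and $b_2$ conditions into one will require careful computation.
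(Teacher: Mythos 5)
Your route for parts (i) and (ii) is the paper's route: the paper also obtains the three-piece formula by letting $b_2 \downarrow b_1$ in Theorem 3.1 (resp.\ 3.2), keeps $H_i$, $N_j$, $\tilde E_i$, $\tilde G_j$ unchanged, and determines $E^1_i=\lim E_i$, $M^1_j=\lim M_j$ from the limiting form of the linear system (the limits of (B.35), (B.36) and (B.43)--(B.45)). The one substantive device your plan leaves unnamed is how that reduced system is actually solved: the paper introduces the auxiliary rational function $f(x)=\sum_i E^1_i/(x-\tilde\beta_i)-\sum_j M^1_j/(x+\hat\gamma_j)-\sum_i H_i e^{\hat\beta_i(b_1-y)}/(x-\hat\beta_i)$, reads off from the limiting equations that $f$ vanishes at $\eta_1,\dots,\eta_m$ and $-\vartheta_1,\dots,-\vartheta_n$ and has prescribed residues at the $\hat\beta_i$, and from this forces a product representation that yields the closed forms; your ``routine partial-fraction bookkeeping'' is precisely this step and is where essentially all the work lies, so be aware it is not merely clerical. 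Where you genuinely depart from the paper is part (iii). The paper proves $\mathbb P_x(U_{e(q)}=b_1)=0$ algebraically: it computes $\lim_{y\downarrow b_1}\mathbb P_x(U_{e(q)}>y)+\lim_{y\uparrow b_1}\mathbb P_x(U_{e(q)}<y)$ from the explicit coefficients and uses the partial-fraction identity of Lemma C.1 to show the non-constant terms cancel, so the sum equals $1$. Your argument---that $\sigma>0$ makes the law of $U_t$ atomless for every $t>0$, then integrate against the density of $e(q)$---is shorter, more conceptual, and independent of the explicit formulas; it is sound provided you actually justify the absolute continuity rather than cite it as folklore, since the drift of $U$ is discontinuous in the state variable (a Girsanov change of measure removing the bounded state-dependent drift makes the law of $U$ on path space equivalent to that of $X$, whose marginals are absolutely continuous, which suffices). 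What the paper's longer computation buys in exchange is a nontrivial consistency check on the coefficient formulas of parts (i) and (ii), which your probabilistic shortcut does not provide.
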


\begin{Remark}
Corollary 3.2 extends the results in Theorem 3.1 of Zhou and Wu (2015) from the double exponential jump diffusion process to the hyper-exponential jump diffusion process.
\end{Remark}

\section{Evaluating variable annuities under the multi-layer expense strategy}
In this section, we apply the results in section 3 to evaluate a variable annuity with the multi-layer expense strategy. For the sake of simplicity, we only investigate the case that $G(x)$ in (2.8) is given by $G(x)=(K-x)_+$ for some guaranteed level $K$.

For given $B_1 \leq B_2$ in (2.4), the fair fee rates $\alpha_1^*$ and $\alpha_2^*$ are computed such that
the initial premium equals the expected value of the discounted payoff, i.e.,
\begin{equation}
F_0=\mathbb E\left[e^{-r T}\max\{F_{T},K\}\right]=\mathbb E\left[e^{-r T}F_{T}\right]+\mathbb E\left[e^{-r T}(K-F_{T})_+\right].
\end{equation}
Besides, with the fair fee rates $\alpha_1^*$ and $\alpha_2^*$, we want to calculate the total fees that will be deducted, i.e.,
\begin{equation}
\int_{0}^{T}e^{-r t}\left(\alpha_1^* F_t\textbf{1}_{\{F_t < B_1\}}+\alpha_2^*F_t\textbf{1}_{\{F_t \geq B_2\}}\right)
dt,
\end{equation}
whose expectation equals (by It$\hat{o}$'s formula)
\begin{equation}
\begin{split}
&\mathbb E\left[\int_{0}^{T}e^{-rt}\left(\alpha_1^* F_t\textbf{1}_{\{F_t < B_1\}}+\alpha_2^*F_t\textbf{1}_{\{F_t \geq B_2\}}\right)dt\right]\\
=&\mathbb E\left[e^{- r T}(F_0e^{X_{T}} - F_T)\right] = F_0 - \mathbb E\left[e^{- r T}F_T\right],
\end{split}
\end{equation}
where the second equality follows from the fact that $e^{-rt}e^{X_t}$ is a martingale.

Formulas (4.1) and (4.3) imply that we need to calculate the two expectations: $\mathbb E\left[e^{- r T}F_T\right]$ and  $\mathbb E\left[e^{-r T}(K-F_{T})_+\right]$ with given $B_1$, $B_2$, $\alpha_1$ and $\alpha_2$. In the following, we will derive the Laplace transforms of these two expectations with respect to $T$. In order to avoid introducing more notation, we only consider the case that $K=F_0$, $B_1=F_0(=K)$ and $B_2 > F_0$ (other cases can be discussed in a similar way).

\begin{Remark}
The case that $K$ equals $F_0$ is known as the "return-of-premium" guarantee. In addition, the situation of $B_1=F_0=K$ is interesting, as it means that the insurer deducts  fees for the embedded guarantee if the guarantee (like a put option) is in-the-money.
\end{Remark}

$\bullet$ Results on $\int_{0}^{\infty}e^{-sT}\mathbb E\left[e^{- r T}F_T\right]dT=\frac{1}{q}\mathbb E\left[F_{e(q)}\right]$ with $q=r+s$.

For $b_1=\ln(B_1/F_0)=0$ and $b_2=\ln(B_2/F_0) > 0$, we have
\begin{equation}
\begin{split}
\mathbb E\left[F_{e(q)}\right]
&=\int_{-\infty}^{\infty}F_0 e^y\mathbb P\left(U_{e(q)}\in dy \right)
=F_0-\int_{-\infty}^{0}F_0 e^y \mathbb P(U_{e(q)}< y)dy\\
&\ \ +\int_{0}^{b_2}F_0 e^y \mathbb P\left(U_{e(q)}> y \right)dy+\int_{b_2}^{\infty}F_0 e^y\mathbb P\left(U_{e(q)}> y \right)dy,
\end{split}
\end{equation}
where we have used the fact that $\lim_{y\uparrow \infty}e^y\mathbb P\left(
U_{e(q)}> y\right)=0$ (which is due to that $\mathbb E\left[e^{U_{e(q)}}\right] < \mathbb E\left[e^{X_{e(q)}}\right] < \infty$ for $q > r$) in the second equality.

 It follows from Theorem 3.1 that
\begin{equation}
\begin{split}
&\int_{b_2}^{\infty} e^y\mathbb P\left(U_{e(q)}> y \right)dy=\int_{b_2}^{\infty}e^y
\sum_{i=1}^{m+1}E_i dy\\
&=\left(\int_{b_2}^{\infty}e^y
h_1 dy, \ldots, \int_{b_2}^{\infty}e^y
h_{2m+2n+4}dy\right)Q_1^{-1}w^{T},
\end{split}
\end{equation}
where
\begin{equation}
w=(\underbrace{1, \ldots, 1}_{m+1}, \underbrace{0, \cdots, 0}_{m+2n+3}).
\end{equation}
Similarly, from Theorems 3.2 and 3.3, we have
\begin{equation}
\begin{split}
&\int_{-\infty}^{0}e^y \mathbb P(U_{e(q)}< y)dy=\int_{-\infty}^{0}e^y \left(\sum_{i=1}^{m+1}\tilde{F}_i+\sum_{j=1}^{n+1}\tilde{G}_je^{\tilde{\gamma}_jy}\right)dy\\
&=\sum_{j=1}^{n+1}\frac{\tilde{G}_j}{1+\tilde{\gamma}_j}+\left(\int_{-\infty}^{0}e^y \tilde{h}_1dy, \ldots,
\int_{-\infty}^{0}e^y \tilde{h}_{2m+2n+4}dy\right)Q_1^{-1}w^{T},
\end{split}
\end{equation}
and
\begin{equation}
\begin{split}
&\int_{0}^{b_2} e^y \mathbb P\left(U_{e(q)}> y \right)dy=\int_{0}^{b_2} e^y\sum_{i=1}^{m+1}\hat{E}_idy\\
&=\left(\int_{0}^{b_2} e^y \hat{h}_1 dy, \ldots, \int_{0}^{b_2} e^y \hat{h}_{2m+2n+4} dy \right)Q_1^{-1}w^{T}.
\end{split}
\end{equation}

Therefore, we obtain that
\begin{equation}
\frac{1}{q}\mathbb E\left[F_{e(q)}\right]=\frac{F_0}{q}+\frac{1}{q}F_0\left(v_1, \ldots, v_{2m+2n+4}\right)Q_1^{-1}w^{T}-\frac{1}{q}\sum_{j=1}^{n+1}\frac{F_0\tilde{G}_j}
{1+\tilde{\gamma}_j},
\end{equation}
where
\begin{small}
\begin{equation}
\begin{split}
&v_1=\sum_{i=1}^{m+1}\frac{\hat{U}_i}{\beta_i-1}\left(1-e^{(1-\beta_i)b_2}\right)+\sum_{j=1}^{n+1}
\frac{\tilde{G}_j}{1+\tilde{\gamma}_j},\\
&v_2=\sum_{i=1}^{m+1}\frac{\hat{U}_i\beta_i}{\beta_i-1}\left(1-e^{(1-\beta_i)b_2}\right)-\sum_{j=1}^{n+1}
\frac{\tilde{G}_j \tilde{\gamma}_j}{1+\tilde{\gamma}_j},\\
&v_{2+k}=\sum_{i=1}^{m+1}\frac{\hat{U}_i\vartheta_k\left(1-e^{(1-\beta_i)b_2}\right)}{(\vartheta_k+\beta_i)(\beta_i-1)}
+\sum_{j=1}^{n+1}
\frac{\tilde{G}_j\vartheta_k}{(\vartheta_k-\tilde{\gamma}_j)(1+\tilde{\gamma}_j)},\ \ k=1,\ldots, n,\\
&v_{2+n+k}=\sum_{i=1}^{m+1}\frac{\hat{U}_i\eta_k \left(1-e^{(1-\beta_i)b_2}\right)}{(\eta_k-\beta_i)(\beta_i-1)}
+\sum_{j=1}^{n+1}
\frac{\tilde{G}_j\eta_k}{(\eta_k+\tilde{\gamma}_j)(1+\tilde{\gamma}_j)},\ \ k=1,\ldots, m,
\end{split}
\end{equation}
\end{small}
and
\begin{small}
\begin{equation}
\begin{split}
&v_{m+n+3}=\sum_{j=1}^{n+1}\frac{\hat{V}_j}{\gamma_j+1}\left(e^{-\gamma_jb_2}-e^{b_2}\right)+\sum_{i=1}^{m+1}
\frac{\tilde{H}_ie^{b_2}}{\hat{\beta}_i-1},\\
&v_{m+n+4}=\sum_{j=1}^{n+1}\frac{\hat{V}_j\gamma_j}{\gamma_j+1}\left(e^{b_2}-e^{-\gamma_jb_2}\right)+\sum_{i=1}^{m+1}
\frac{H_i\hat{\beta}_i e^{b_2}}{\hat{\beta}_i-1},\\
&v_{m+n+4+k}=\sum_{j=1}^{n+1}\frac{\hat{V}_j\vartheta_k\left(e^{-\gamma_jb_2}-e^{b_2}\right)}{(\vartheta_k-\gamma_j)(\gamma_j+1)}
+\sum_{i=1}^{m+1}
\frac{H_i \vartheta_k e^{b_2}}{(\vartheta_k+\hat{\beta}_i)(\hat{\beta}_i-1)},\ \ 1 \leq k \leq n,\\
&v_{m+2n+4+k}=\sum_{j=1}^{n+1}\frac{\hat{V}_j\eta_k\left(e^{-\gamma_jb_2}-e^{b_2}\right)}{(\eta_k+\gamma_j)(\gamma_j+1)}
+
\sum_{i=1}^{m+1}
\frac{H_i \eta_k e^{b_2}}{(\eta_k-\hat{\beta}_i)(\hat{\beta}_i-1)},\ \ 1 \leq k \leq m.
\end{split}
\end{equation}
\end{small}

$\bullet$ Results on $\int_{0}^{\infty}e^{-sT}\mathbb E\left[e^{-r T}(F_0-F_{T})_+\right]dT=\frac{1}{q}
\mathbb E\left[(F_0-F_{e(q)})_+\right]$.

For $b_1=\ln(B_1/F_0)=0$ and $b_2=\ln(B_2/F_0) > 0$, we have
\begin{equation}
\begin{split}
&\frac{1}{q}\mathbb E\left[(F_0-F_{e(q)})_+\right]
=\frac{F_0}{q}\int_{-\infty}^{0}e^y\mathbb P\left(U_{e(q)}< y\right)dy\\
&=
\frac{1}{q}\sum_{j=1}^{n+1}\frac{F_0\tilde{G}_j}{1+\tilde{\gamma}_j}
+\frac{1}{q}F_0\left(\tilde{v}_1, \ldots, \tilde{v}_{2m+2n+4}\right)Q_1^{-1}w^{T},
\end{split}
\end{equation}
where
\begin{equation}
\begin{aligned}
&\tilde{v}_1=-\sum_{j=1}^{n+1}\frac{\tilde{G}_j}{1+\tilde{\gamma}_j},
\ \ \tilde{v}_2=\sum_{j=1}^{n+1}\frac{\tilde{G}_j\tilde{\gamma}_j}{1+\tilde{\gamma}_j},\\
&\tilde{v}_{2+k}=-\sum_{j=1}^{n+1}\frac{\tilde{G}_j\vartheta_k}{(\vartheta_k-\tilde{\gamma}_j)
(1+\tilde{\gamma}_j)}, \ \ \ \ k=1,2,\ldots, n,\\
&\tilde{v}_{2+n+k}=-\sum_{j=1}^{n+1}\frac{\tilde{G}_j\eta_k}{(\eta_k+\tilde{\gamma}_j)
(1+\tilde{\gamma}_j)}, \ \ k=1,\ldots, m,
\end{aligned}
\end{equation}
and $\tilde{v}_j=0$ for $3+n+m\leq j \leq 2m+2n+4$.

\section{Numerical examples}
In this section, some numerical examples are given to illustrate the results obtained in section 4. Following section 4, we consider a variable annuity with guaranteed maturity payment $\max\{F_0,F_T\}$ under the multi-layer expense strategy (1.2) with $B_1=F_0$ and $B_2 > F_0$, where $F_t$ is determined by (2.1), (2.6) and (2.7). For the sake of simplicity, we consider the case that $m=n=1$ in (2.2), which means that $X$ in (2.1) is a double exponential jump diffusion process. In the following, for this variable annuity, we will compute its fair fee rates $\alpha_1^{*}$ and $\alpha_2^{*}$ via (4.1), (4.9) and (4.12). Besides, with the obtained $\alpha_1^{*}$ and $\alpha_2^{*}$, numerical results on the total collected fees (see (4.3)) and the total time of deducting fees (see Remark 3.3) are also presented.

For numerical computing the above quantities through Laplace inversion, we choose the Euler inversion algorithm, which is first developed in Dubner and Abate (1968) and can be implemented easily. We remark that many papers use this algorithm and its extensions to do numerical Laplace inversion, see, e.g., Petrella (2004). For the convenience of the reader only, we give some important results on this algorithm. For a real function $f(\cdot)$ defined in $(0, \infty)$ and $T\neq 0$,
\begin{equation}
f(T)=\frac{e^{\tilde{A}/2}}{2T}Re(\hat{f}(\frac{\tilde{A}}{2T}))+\frac{e^{\tilde{A}/2}}{T}\sum_{k=1}^{\infty}(-1)^kRe(\hat{f}(
\frac{\tilde{A}+2k\pi i}{2T}))-e_d,
\end{equation}
where $\hat{f}(\cdot)$ is the Laplace transform of $f(\cdot)$, $\tilde{A}$ is a positive constant, $e_d$ is the
 discretization errors and $Re(x)$ means the real part of $x$. Moreover, if $|f(T)|\leq B$, then $|e_d|\leq B e^{-\tilde{A}}$; if $|f(T)|\leq BT$, then $|e_d|\leq 3 B Te^{-\tilde{A}}$ (see (5.29) in Abate and Whitt (1992)).

Note that, as functions of $T$, $\max\left\{\mathbb E\left[e^{-rT}F_T\right], \mathbb E\left[e^{-rT}(F_0-F_T)_+\right]\right\} \leq F_0$ and $\max\left\{\mathbb E\left[\int_{0}^{T}\textbf{1}_{\{U_t< b_1\}}dt\right], \mathbb E\left[\int_{0}^{T}\textbf{1}_{\{U_t \geq b_2\}}dt\right] \right\} \leq T$. Therefore, in the following numerical calculations, we set $\tilde{A}=20$, which is enough to control the discretization errors. Under the martingale measure, the values of the parameters unless stated otherwise are given in Table 1.
\begin{table}[!h]
    \centering
     \caption{Values of the parameters.}
    \begin{tabular}{cccccccccc}
        \hline
      parameter& $F_0$  & $\sigma$ & $\lambda$ & $p_1$ &$q_1$& $\eta_1$ & $\vartheta_1$ & $r$ \\
       value & 100& 0.2&1&0.5&0.5&15&15&0.05\\
      \hline
       \end{tabular}
  \end{table}
We remind the reader that the value of the parameter $\mu$ is computed such that $\psi(1)=r$ (the martingale condition), i.e., $\mu=r-\frac{\sigma^2}{2}-\lambda\left(\frac{p_1\eta_1}{\eta_1-1} + \frac{q_1\vartheta_1}{\vartheta_1 + 1}-1\right)$. In addition, all numerical calculations are implemented in MATLAB. For the sake of brevity, the two quantities: $\mathbb E\left[\int_{0}^{T}
\textbf{1}_{\{U_t < b_1\}}dt\right]$ and $\mathbb E\left[\int_{0}^{T}
\textbf{1}_{\{U_t \geq b_2\}}dt\right]$ with $b_1=\ln(\frac{B_1}{F_0})$ and $b_2=\ln(\frac{B_2}{F_0})$, are denoted respectively by $Ttime1$ and $Ttime2$ in  the following tables.

  \begin{table}[!h]
    \centering
     \caption{Fair fee rates $\alpha_1^{*}$ with respect to $B_2$, where $T=10$ and $\alpha_2^*=\alpha^*_1/2$.}
    \begin{tabular}{c|c|c|c|c|c|c|c}
        \hline
      $B_2$& $105$& $110$ & $120$ & $150$ & $200$ & $300$ & $1000$ \\
         \hline
       $\alpha_1^{*}$ & 0.016& 0.017&0.018& 0.022& 0.028& 0.038& 0.048\\
          \hline
       Total fees &9.34& 9.35&9.47& 9.83&10.42& 11.85&13.15\\
          \hline
       Ttime1&  4.44 &4.43&4.43&4.46&4.56&4.75& 4.94\\
          \hline
       Ttime2&5.03&4.58&3.83& 2.31&1.10&0.32&0.002\\
          \hline
       \end{tabular}
  \end{table}
First, we let $\alpha_2= \frac{1}{2}\alpha_1$ and study the relationship between the fair fee rate $\alpha_1^{*}$ and $B_2$. We summarize all the related results in Table 2. From the last column of Table 2, we see that the fair fee rate $\alpha^*_1$ for the case of $B_2=1000$ is $4.8\%$, which is too large to be used in practice. In addition, the total time of deducting fees (Ttime1+Ttime2) is about $4.94$. This means that the insurer will have no fees income during $5.06$ years (more than half of the variable annuity's maturity) in total, which would be not accepted by the insurer.  When $B_2$ equals $120$, the value of $\alpha_1^*$  becomes $1.8\%$ and the total time of deducting fees increases to $8.26$. Under this case (i.e., $B_2=120$), if the value of $F_t$ exceeds $120$, the fee rate imposed to the policyholder is only $0.009$ ($\alpha^*_1/2$). Therefore, the strategy that $B_2$ takes the value of $120$ is advisable. Of course, when $B_2$ decreases, the value of $\alpha^*_1$ also declines. However, from Table 2, we know that the value of $\alpha_1^*$ when $B_2= 105$ is almost the same as that when $B_2=120$. Besides, under these two cases (i.e., $B_2=105$ and $B_2=120$), the total fees charged by the insurer are nearly the same as well, i.e., the policyholder pays the same cost for the provided guarantee. And because of this, the policyholder  will prefer to the case that $B_2=120$. So, compared with $B_2=120$, the strategy of $B_2=105$ has less competitiveness. Therefore, in the following tables (except Table 4), we only consider that $B_2=120$.

We are interested in the connection of the fair fee rates $\alpha_2^*$, $\alpha_1^*$ and the maturity $T$. The corresponding results are given in Table 3, where once again we let $\alpha_2$ equal $\alpha_1/2$.
\begin{table}[!h]
    \centering
     \caption{Fair fee rates $\alpha_1^{*}$ with respect to $T$, where $B_2=120$ and $\alpha_2^*=\alpha_1^*/2$.}
    \begin{tabular}{c|c|c|c|c|c|c|c}
        \hline
      $T$& $1$& $3$ & $5$ & $7$ & $10$ & 12 & $15$\\
         \hline
       $\alpha_1^{*}$ &  0.366& 0.098& 0.051& 0.031&0.018& 0.013&0.009\\
          \hline
       Total fees &21.26&15.82 & 13.53&11.46&9.47& 8.20&7.08\\
          \hline
       Ttime1& 0.68 &1.66&2.54&3.32&4.43&  5.11& 6.11\\
          \hline
       Ttime2&0.07&0.60&1.37&2.29&3.83& 4.94&6.67\\
          \hline
       \end{tabular}
  \end{table}

Intuitively, when the maturity $T$ becomes large, the total time of deducting fees also grows, which yields a small fee rate. This intuition is confirmed by the second row of Table 3. We remark that the fact that the fair fee rate $\alpha_1^*$ decreases with $T$ encourages the policyholders to hold longer variable annuities in some sense. Besides, unlike the usual financial options (whose prices increase with their maturity), the cost of the guarantees embedded in VAs (i.e., the Total fees in Table 3) decreases with the maturity. One possible reason for this difference is that the cost for a financial option is charged at the beginning of the contract while that for the guaranteed benefit of a variable annuity is paid during the whole life of the policy. Of course, the insurer cannot deduct all costs at the inception of a variable annuity. Otherwise, the insured will pay too much money due to the  long maturity. In return, for a financial option (whose maturity is typically lower than one year), one cannot take the fee deducting method used in variable annuities, because the fair fee rate $\alpha_1^*$ amounts to $36.6$ percent when $T=1$ in Table 3. Moreover, numerical results in Table 4 also demonstrate that the fee rates are too large even when $B_2=100.1$ and $\alpha_2=\alpha_1$ (note that the case of $B_2=100.1$ and $\alpha_2=\alpha_1$ means that one charges fees by a fixed rate on matter what the value of $F_t$ is).

\begin{table}[!h]
    \centering
     \caption{Fair fee rates $\alpha_1^{*}$ with respect to $B_2$, where $T=1$.}
    \begin{tabular}{|cccc|cccc|}
    \hline
    \multicolumn{4}{|c|}{$\alpha^*_2=0.5 \alpha^*_1$} & \multicolumn{4}{|c|}{$\alpha^*_2=\alpha^*_1$}\\
        \hline
      $B_2$&100.1& 110&120&  $B_2$&100.1& 110&120\\
         \hline
       $\alpha_1^{*}$ & 0.206 & 0.282& 0.366 &  $\alpha_1^{*}$ &0.131& 0.197&  0.291 \\
          \hline
       Total fees & 15.09 &  17.49&  21.26 & Total fees & 12.23& 13.74&17.61 \\
          \hline
       Ttime1& 0.652& 0.646& 0.676  & Ttime1&0.626& 0.605&   0.641\\
          \hline
       Ttime2&  0.345& 0.159&0.070 & Ttime2& 0.370&0.165& 0.067 \\
          \hline
       \end{tabular}
  \end{table}

In short, short-term contracts and long-term contracts should be treated separately, and it is likely that a pricing or hedging approach, which is proper to short-term contracts, may be not suitable for long-term contracts.

\begin{table}[!h]
    \centering
     \caption{Fair fee rates $\alpha_1^{*}$ with respect to $\sigma$ and $r$, where $B_2=120$, $T=10$ and $\alpha^*_2=\alpha^*_1/2$.}
    \begin{tabular}{c|c|c|c|c|c}
        \hline
      $\sigma$& $0.1$& $0.15$ & $0.2$ & $0.25$ & $0.3$\\
         \hline
       $\alpha_1^{*}$ &0.005& 0.011&0.018&0.027&0.036\\
          \hline
       Total fees & 2.35&5.57&9.47&14.37&19.08\\
          \hline
      $r$& $0.4$& $0.45$ & $0.5$ & $0.55$ & $0.6$\\
         \hline
       $\alpha_1^{*}$ &0.026& 0.021&0.018&0.015&0.013\\
          \hline
       Total fees &13.84&11.13&9.47&7.85&6.75\\
          \hline
       \end{tabular}
  \end{table}

In Table 5, we consider how the fair fee rates depend on the volatility $\sigma$ and the risk free rate $r$, respectively. It is obvious that the higher (lower) the volatility $\sigma$ (the risk free rate $r$) is, the larger the fee rate $\alpha_1^*$ is. In addition, compared with the risk free rate $r$, the volatility $\sigma$ has a larger influence on the fee rate $\alpha_1^*$. It should be noted that the total fees decrease with the volatility $\sigma$. Especially, when $\sigma=0.1$, the total fees are too small (just $2.35$) and only account for $2.35\%$ of the initial premium. This result suggests that the more risk aversion the insured is, the less money he/she pays.

\begin{table}[!h]
    \centering
     \caption{Fair fee rates $\alpha_1^{*}$ with respect to $\eta_1$ and $\vartheta_1$, where $B_2=120$, $T=10$ and $\alpha^*_2=\alpha^*_1/2$.}
    \begin{tabular}{c|c|c|c|c|c|c}
        \hline
      $\eta_1$&6& 8&10 &15&20 &50\\
         \hline
       $\alpha_1^{*}$ &0.028&0.023&0.020&0.018&0.017& 0.016\\
          \hline
       Total fees & 15.13& 12.29&10.62&9.47&8.92& 8.36\\
          \hline
            $\vartheta_1$&6& 8&10 &15&20 &50\\
         \hline
       $\alpha_1^{*}$& 0.026&0.022& 0.020&0.018&0.017& 0.016\\
          \hline
       Total fees &13.58 &11.56& 10.52&9.47&8.95& 8.41\\
          \hline
       \end{tabular}
  \end{table}

Finally, in Table 6, we give the results about the sensitivity of the rate $\alpha_1$ with respect to changes in jump densities $\eta_1$ and $\vartheta_1$. From Table 6, one can see that the fair fee rate $\alpha_1^*$ is decrease with both $\vartheta_1$ and $\eta_1$. When the value of $\eta_1$ rises from $6$ to $50$, the total fees decline from $15.13$ to $8.36$. Thus the parameter $\eta_1$ has a significant effect on the total fees. Besides, one can draw a similar conclusion for the parameter $\vartheta_1$ from Table 6. In short, the jump risk has a large influence on the pricing of variable annuities and thus should be treated seriously.


\section{Conclusion}
In this paper, we have investigated  the problem of pricing variable annuities with a multi-layer expense strategy.
In theory, we have derived formulas for the Laplace transform of the distribution of a jump diffusion process with hyper-exponential jumps and three-valued drift. Applying these formulas, we compute the fair fee rate for a variable annuity with guaranteed minimum maturity benefit under the multi-layer expense strategy via Laplace inversion. Moreover, the total fees and the total time of charging fees are calculated as well. From the numerical results, we find that the fair fee rate is sensitive to the volatility and the risk-free rate. Therefore, a more interesting and challenging extension is considering the case that both the interest rate and the volatility are allowed to be stochastic. Such extension is left for future research.

\bigskip
\begin{appendix}
\medskip
\section{}
The matrix $Q_1$ in (3.6) is given by
\begin{equation}
Q_1=\left(\begin{array}{cc}
Q_{11}&Q_{12}
\end{array}
\right), \tag{A.1}
\end{equation}
where
\begin{equation}
Q_{11}=
\begin{pmatrix}
\begin{smallmatrix}
1&\tilde{\beta}_1&\frac{\vartheta_1}{\vartheta_1+\tilde{\beta}_1}&\cdots&
\frac{\vartheta_n}{\vartheta_n+\tilde{\beta}_1}&\frac{\eta_1}{\eta_1-\tilde{\beta}_1}&\cdots&
\frac{\eta_m}{\eta_m-\tilde{\beta}_1}\\
\vdots&\vdots&\vdots&\vdots&\vdots&\vdots&\vdots&\vdots&\\
1&\tilde{\beta}_{m+1}&\frac{\vartheta_1}{\vartheta_1+\tilde{\beta}_{m+1}}&\cdots&
\frac{\vartheta_n}{\vartheta_n+\tilde{\beta}_{m+1}}&\frac{\eta_1}{\eta_1-\tilde{\beta}_{m+1}}&\cdots&
\frac{\eta_m}{\eta_m-\tilde{\beta}_{m+1}}\\
-L^{\beta_1}&-\beta_1L^{\beta_1}&\frac{-\vartheta_1L^{\beta_1}}{\vartheta_1+\beta_{1}}&\cdots&
\frac{-\vartheta_nL^{\beta_1}}{\vartheta_n+\beta_{1}}&\frac{\eta_1L^{\beta_1}}{\beta_{1}-\eta_1}&\cdots&
\frac{\eta_mL^{\beta_1}}{\beta_{1}-\eta_m}\\
\vdots&\vdots&\vdots&\vdots&\vdots&\vdots&\vdots&\vdots&\\
-L^{\beta_{m+1}}&-\beta_{m+1}L^{\beta_{m+1}}&\frac{-\vartheta_1L^{\beta_{m+1}}}{\vartheta_1+\beta_{m+1}}&\cdots&
\frac{-\vartheta_nL^{\beta_{m+1}}}{\vartheta_n+\beta_{m+1}}&\frac{\eta_1L^{\beta_{m+1}}}{\beta_{m+1}-\eta_1}
&\cdots&
\frac{\eta_mL^{\beta_{m+1}}}{\beta_{m+1}-\eta_m}\\
-1&\gamma_1 &\frac{-\vartheta_1}{\vartheta_1-\gamma_1}&\cdots&
\frac{-\vartheta_n}{\vartheta_n-\gamma_{1}}&\frac{-\eta_1}{\eta_1+\gamma_{1}}
&\cdots&
\frac{-\eta_m}{\eta_m+\gamma_{1}}\\
\vdots&\vdots&\vdots&\vdots&\vdots&\vdots&\vdots&\vdots\\
-1&\gamma_{n+1}&\frac{-\vartheta_1}{\vartheta_1-\gamma_{n+1}}&\cdots&
\frac{-\vartheta_n}{\vartheta_n-\gamma_{n+1}}&\frac{-\eta_1}{\eta_1+\gamma_{n+1}}
&\cdots&
\frac{-\eta_m}{\eta_m+\gamma_{n+1}}\\
0&0&0&\cdots&0&0&\cdots&0\\
\vdots&\vdots&\vdots&\vdots&\vdots&\vdots&\vdots&\vdots\\
0&0&0&\cdots&0&0&\cdots&0\\
\end{smallmatrix}
\end{pmatrix},\tag{A.2}
\end{equation}
and

\begin{equation}
Q_{12}=
\begin{pmatrix}
\begin{smallmatrix}
0&0&0&\cdots&0&0&\cdots&0\\
\vdots&\vdots&\vdots&\vdots&\vdots&\vdots&\vdots&\vdots\\
0&0&0&\cdots&0&0&\cdots&0\\
1&\beta_1&\frac{\vartheta_1}{\vartheta_1+\beta_1}&\cdots&
\frac{\vartheta_n}{\vartheta_n+\beta_1}&\frac{\eta_1}{\eta_1-\beta_1}&\cdots&
\frac{\eta_m}{\eta_m-\beta_1}\\
\vdots&\vdots&\vdots&\vdots&\vdots&\vdots&\vdots&\vdots&\\
1&\beta_{m+1}&\frac{\vartheta_1}{\vartheta_1+\beta_{m+1}}&\cdots&
\frac{\vartheta_n}{\vartheta_n+\beta_{m+1}}&\frac{\eta_1}{\eta_1-\beta_{m+1}}&\cdots&
\frac{\eta_m}{\eta_m-\beta_{m+1}}\\
L^{\gamma_1}&-\gamma_1L^{\gamma_1}&\frac{\vartheta_1L^{\gamma_1}}{\vartheta_1-\gamma_{1}}&\cdots&
\frac{\vartheta_nL^{\gamma_1}}{\vartheta_n-\gamma_{1}}&\frac{\eta_1L^{\gamma_1}}{\eta_1+\gamma_{1}}&\cdots&
\frac{\eta_mL^{\gamma_1}}{\eta_m+\gamma_{1}}\\
\vdots&\vdots&\vdots&\vdots&\vdots&\vdots&\vdots&\vdots&\\
L^{\gamma_{n+1}}&-\gamma_{n+1}L^{\gamma_{n+1}}&\frac{\vartheta_1L^{\gamma_{n+1}}}{\vartheta_1-
\gamma_{n+1}}&\cdots&
\frac{\vartheta_nL^{\gamma_{n+1}}}{\vartheta_n-\gamma_{n+1}}&\frac{\eta_1L^{\gamma_{n+1}}}
{\eta_1+\gamma_{n+1}}&\cdots&
\frac{\eta_m L^{\gamma_{n+1}}}{\eta_m+\gamma_{n+1}}\\
-1&\hat{\gamma}_1 &\frac{-\vartheta_1}{\vartheta_1-\hat{\gamma}_1}&\cdots&
\frac{-\vartheta_n}{\vartheta_n-\hat{\gamma}_{1}}&\frac{-\eta_1}{\eta_1+\hat{\gamma}_{1}}
&\cdots&
\frac{-\eta_m}{\eta_m+\hat{\gamma}_{1}}\\
\vdots&\vdots&\vdots&\vdots&\vdots&\vdots&\vdots&\vdots&\\
-1&\hat{\gamma}_{n+1}&\frac{-\vartheta_1}{\vartheta_1-\hat{\gamma}_{n+1}}&\cdots&
\frac{-\vartheta_n}{\vartheta_n-\hat{\gamma}_{n+1}}&\frac{-\eta_1}{\eta_1+\hat{\gamma}_{n+1}}
&\cdots&
\frac{-\eta_m}{\eta_m+\hat{\gamma}_{n+1}}
\end{smallmatrix}
\end{pmatrix},\tag{A.3}
\end{equation}
with $L=e^{b_1-b_2}$.

\section{The proof of Theorem 3.1}
Before starting the derivation of Theorem 3.1, some notation is introduced first.
We set $\tilde{Y} = \{\tilde{Y}_t :=  X_t - \alpha_1 t; t\geq 0\}$ and $\hat{Y} = \{\hat{Y}_t :=  X_t - \alpha_2 t; t\geq 0\}$. The law of $\tilde{Y}$ ($\hat{Y}$) starting from $y$ and the corresponding expectation are denoted by $\tilde{\mathbb P}_y$ ($\hat{\mathbb P}_y$) and $\tilde{\mathbb E}_y$ ($\hat{\mathbb E}_y$), respectively. We write briefly $\tilde{\mathbb P}$ ($\hat{\mathbb P}$) and
$\tilde{\mathbb E}$ ($\hat{\mathbb E}$) when $y = 0$ . For $z \in (-\vartheta_1, \eta_1)$, the L\'evy exponents of $\tilde{Y}$ and $\hat{Y}$ are given respectively by $\tilde{\psi}(z)$ and $\hat{\psi}(z)$ in (3.3).
For any $c, C \in \mathbb R$, we define the following stopping times:
\begin{equation}
\begin{aligned}
&\tau_{c}^{-} := \inf\{t\geq 0: X_t \leq c\}, & \tau_{C}^{+} := \inf\{t\geq 0: X_t \geq C \},\\
&\tilde{\tau}_{c}^{-} := \inf\{t\geq 0: \tilde{Y}_t \leq c\}, & \tilde{\tau}_{C}^{+} := \inf\{t\geq 0: \tilde{Y}_t \geq C \},\\
&\hat{\tau}_{c}^{-} := \inf\{t\geq 0: \hat{Y}_t \leq c\}, & \hat{\tau}_{C}^{+} := \inf\{t\geq 0: \hat{Y}_t \geq C \},\\
&\kappa_c^-:=\inf\{t\geq 0: U_t \leq c \}, & \kappa_C^+ :=\inf\{t \geq 0: U_t \geq C\}.
\end{aligned}
\end{equation}

Besides, we recall the results on the solutions of one-sided and two-sided exit problems of $X$, $\tilde{Y}$ and $\hat{Y}$ in the following lemma. For their proofs, one can refer to, e.g.,  Yin et al. (2013) (see Lemmas 2.2, 2.4 and  2.5).
\renewcommand{\thelem}{B.\arabic{lem}}
\begin{lem}
(i) Consider any nonnegative measurable function $g$ such that $\int_{-\infty}^{0}g(c+y)e^{\vartheta_j y}dy < \infty$ for $j=1, 2, \ldots, n$. For $q>0$ and $x > c$, we have
\begin{equation}
\begin{split}
&\hat{\mathbb E}_x\left[e^{-q\hat{\tau}_{c}^{-}}g(\hat{Y}_{\hat{\tau}_{c}^{-}})\right] =\left(g(c), g_{\vartheta_1}(c), \ldots, g_{\vartheta_n}(c)\right)\hat{Q}^{-1}\left(\begin{array}{c}
e^{-\hat{\gamma}_{1}(x - c)}\\[0.3em]
\vdots\\
e^{-\hat{\gamma}_{n+1}(x - c)}
\end{array}\right),
\end{split}
\end{equation}
where
\begin{equation}
\hat{Q}=
\left(
\begin{array}{ccccc}
1 & \frac{\vartheta_1}{\vartheta_1-\hat{\gamma}_{1}} & \cdots&
\frac{\vartheta_n}{\vartheta_n-\hat{\gamma}_{1}}  \\
\vdots&\vdots&\vdots&\vdots\\
1 & \frac{\vartheta_1}{\vartheta_1-\hat{\gamma}_{n+1}} & \cdots&
\frac{\vartheta_n}{\vartheta_n-\hat{\gamma}_{n+1}}
\end{array}\right),
\end{equation}
and $g_{\vartheta_j}(c)=\int_{-\infty}^{0}g(c+y)\vartheta_je^{\vartheta_jy}dy$ for $j=1,2,\ldots,n$.

(ii) Consider any nonnegative measurable function $g$ such that $\int_{0}^{\infty}g(C+y)e^{-\eta_i y}dy < \infty$ for $i=1, 2, \ldots, m$. For $q > 0$ and $x < C$, we have
\begin{small}
\begin{equation}
\tilde{\mathbb E}_x\left[e^{-q\tilde{\tau}_{C}^{+}}g(\tilde{Y}_{\tilde{\tau}_{C}^{+}})\right] =\left(g(C),g_{\eta_1}(C),\ldots,g_{\eta_m}(C)\right)
\tilde{Q}^{-1}\left(\begin{array}{c}
e^{\tilde{\beta}_{1}(x - C)}\\
\vdots\\
e^{\tilde{\beta}_{m+1}(x-C)}\\
\end{array}\right),
\end{equation}
\end{small}
where
\begin{equation}
\tilde{Q}=
\left(
\begin{array}{ccccc}
1 & \frac{\eta_1}{\eta_1-\tilde{\beta}_{1}} &\cdots& \frac{\eta_m}{\eta_m-\tilde{\beta}_{1}}\\
\vdots&\vdots&\vdots&\vdots\\
1 & \frac{\eta_1}{\eta_1-\tilde{\beta}_{m+1}} &\cdots& \frac{\eta_m}{\eta_m-\tilde{\beta}_{m+1}}\\
\end{array}\right),
\end{equation}
and $g_{\eta_i}(C)=\int_{0}^{\infty}\eta_ie^{-\eta_i y}g(C+y)dy$ for $i=1,2,\ldots,m$.

(3) Consider  any nonnegative measurable function $g$ such that $\int_{0}^{\infty}g(C+y)e^{-\eta_i y}dy < \infty$ for $i=1, 2, \ldots, m$, and $\int_{-\infty}^{0}g(c+y)e^{\vartheta_j y}dy < \infty$ for $j=1, 2, \ldots, n$.  For $q > 0$ and $c< x < C$, we have
\begin{small}
\begin{equation}
\begin{split}
\mathbb E_x\left[e^{-q\tau}g(X_{\tau})\right]
=\left(g(C),g_{\eta_1}(C), \ldots,g_{\eta_m}(C),g(c),g_{\vartheta_1}(c),\ldots,g_{\vartheta_n}(c)\right)Q_{c,C}^{-1}R_{c,C}
\end{split}
\end{equation}
\end{small}
where $\tau=\min\{\tau_c^-,\tau_C^+\}$, the transpose of the vector $R_{c,C}$ is given by
\begin{equation}
R_{c,C}^{T}=\left(e^{\beta_{1}(x - C)},\ldots,e^{\beta_{m+1}(x-C)},e^{-\gamma_{1}(x - c)},\ldots,
e^{-\gamma_{n+1}(x - c)}
\right),
\end{equation}
and
\begin{small}
\begin{equation}
Q_{c,C}=
\left(
\begin{array}{cccccccc}
1 & \frac{\eta_1}{\eta_1-\beta_{1}} & \cdots& \frac{\eta_m}{\eta_m-\beta_{1}}&\bar{x}^{\beta_{1}}&\frac{\vartheta_1\bar{x}^{\beta_{1}}}
{\vartheta_1+\beta_{1}}&\cdots&
\frac{\vartheta_n\bar{x}^{\beta_{1}}}{\vartheta_n+\beta_{1}} \\
\vdots&\vdots&\vdots&\vdots&\vdots&\vdots&\vdots&\vdots\\
1 & \frac{\eta_1}{\eta_1-\beta_{m+1}}&\cdots& \frac{\eta_m}{\eta_m-\beta_{m+1}}&\bar{x}^{\beta_{m+1}}&\frac{\vartheta_1\bar{x}^{\beta_{m+1}}}{\vartheta_1+\beta_{m+1}}&\cdots&
\frac{\vartheta_n\bar{x}^{\beta_{m+1}}}{\vartheta_n+\beta_{m+1}}\\
\bar{x}^{\gamma_{1}}&\frac{\eta_1\bar{x}^{\gamma_{1}}}{\eta_1+\gamma_{1}}&\cdots&\frac{\eta_m
\bar{x}^{\gamma_{1}}}{\eta_m+\gamma_{1}}&1 & \frac{\vartheta_1}{\vartheta_1-\gamma_{1}}& \cdots&
\frac{\vartheta_n}{\vartheta_n-\gamma_{1}}\\
\vdots&\vdots&\vdots&\vdots&\vdots&\vdots&\vdots&\vdots\\
\bar{x}^{\gamma_{n+1}}&\frac{\eta_1\bar{x}^{\gamma_{n+1}}}{\eta_1+\gamma_{n+1}}&\cdots&\frac{\eta_m
\bar{x}^{\gamma_{n+1}}}{\eta_m+\gamma_{n+1}}&1 & \frac{\vartheta_1}{\vartheta_1-\gamma_{n+1}}& \cdots&
\frac{\vartheta_n}{\vartheta_n-\gamma_{n+1}}\\
\end{array}\right),
\end{equation}
\end{small}
with $\bar{x}=e^{c-C}$.
\end{lem}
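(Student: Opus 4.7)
All three parts share a common three-step strategy exploiting the hyper-exponential structure: (a) spectral decomposition of bounded solutions of $(\mathcal L - q)u = 0$ on the relevant domain, coming from the root structure (3.2) of $\hat\psi(z)=q$ (respectively $\tilde\psi(z)=q$, $\psi(z)=q$), which supplies the candidate bounded exponentials $e^{-\hat\gamma_k(x-c)}$ on the half-line $(c,\infty)$, $e^{\tilde\beta_k(x-C)}$ on $(-\infty,C)$, and both families $e^{\beta_k(x-C)},\,e^{-\gamma_k(x-c)}$ on a compact interval $(c,C)$; (b) a structural decomposition of the first-exit joint law using the memoryless property of the exponential jump components; and (c) identification of linear coefficients by testing against $g(y)=e^{\theta y}$ and invoking optional stopping of the discounted martingales $e^{-qt+\theta Y_t}$ at the roots $\theta$ of the relevant L\'evy exponent equation.

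\medskip

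For part (i), fix $x>c$ and write $u_g(x):=\hat{\mathbb E}_x[e^{-q\hat\tau_c^-}g(\hat Y_{\hat\tau_c^-})]$. On $\{\hat\tau_c^-<\infty\}$, either the Brownian component carries $\hat Y$ continuously to $c$ (so $\hat Y_{\hat\tau_c^-}=c$), or the exit is caused by a negative jump of exponential type $j$, in which case the memoryless property forces the undershoot $c-\hat Y_{\hat\tau_c^-}$ to be $\mathrm{Exp}(\vartheta_j)$ independently of $\hat\tau_c^-$. Hence there exist functions $\phi_0,\ldots,\phi_n$ on $(c,\infty)$, independent of $g$, with
\[
u_g(x) \;=\; \phi_0(x)\,g(c) + \sum_{j=1}^n \phi_j(x)\,g_{\vartheta_j}(c).
\]
Each $\phi_l$ is bounded and $q$-harmonic for the generator of $\hat Y$ away from $c$; by the Wiener--Hopf analysis of hyper-exponential L\'evy processes, whose killed descending ladder-height Laplace transform is a proper rational function of degree $n+1$ with poles exactly at $\hat\gamma_1,\ldots,\hat\gamma_{n+1}$, the bounded solution space on $(c,\infty)$ is spanned by $\{e^{-\hat\gamma_k(x-c)}\}_{k=1}^{n+1}$. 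Thus $u_g(x)=\sum_{k=1}^{n+1}c_k(g)\,e^{-\hat\gamma_k(x-c)}$, with $g\mapsto c_k(g)$ linear in the boundary data $(g(c),g_{\vartheta_1}(c),\ldots,g_{\vartheta_n}(c))$. To identify this map, plug in $g(y)=e^{-\hat\gamma_k y}$: since $e^{-qt-\hat\gamma_k\hat Y_t}$ is a bounded martingale on $[0,\hat\tau_c^-]$, optional stopping yields $u_g(x)=e^{-\hat\gamma_k x}$, while the boundary-data vector equals $e^{-\hat\gamma_k c}$ times the $k$-th row of $\hat Q$. Running $k=1,\ldots,n+1$ produces a matrix identity whose inversion gives exactly (B.2), and linearity extends it to all admissible $g$.

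\medskip

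Parts (ii) and (iii) follow the same template. For (ii), reverse the roles of downward and upward exit: work with $\tilde Y$, replace $-\hat\gamma_k$ by the $m+1$ positive roots $\tilde\beta_k$ of $\tilde\psi=q$, replace memoryless undershoots by memoryless overshoots $\tilde Y_{\tilde\tau_C^+}-C\sim\mathrm{Exp}(\eta_i)$, and test with $g(y)=e^{\tilde\beta_k y}$ via the martingale $e^{-qt+\tilde\beta_k\tilde Y_t}$, bounded on $[0,\tilde\tau_C^+]$ when $x<C$. For (iii), the two-sided exit $\tau=\min\{\tau_c^-,\tau_C^+\}$ has four disjoint modes (continuous at $c$, continuous at $C$, upward jump over $C$ producing an $\mathrm{Exp}(\eta_i)$ overshoot, downward jump below $c$ producing an $\mathrm{Exp}(\vartheta_j)$ undershoot), giving the $m+n+2$ boundary functionals appearing in (B.6); bounded $q$-harmonic functions on $(c,C)$ span the $(m+n+2)$-dimensional space generated by $\{e^{\beta_k(x-C)}\}_{k=1}^{m+1}\cup\{e^{-\gamma_k(x-c)}\}_{k=1}^{n+1}$; and the $m+n+2$ test functions $g(y)=e^{\beta_k y}$ (with martingale $e^{-qt+\beta_k X_t}$ bounded on $[0,\tau]$ thanks to the upper barrier $C$) and $g(y)=e^{-\gamma_k y}$ (bounded below by $c$) produce the rows of $Q_{c,C}$ one at a time. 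The main obstacle throughout is step (a): rigorously establishing that the bounded solution space of the killed integro-differential equation $(\mathcal L-q)u=0$ on the relevant domain has exactly the claimed dimension and basis. This can be done either by a Laplace-transform uniqueness argument applied directly to the integro-differential equation, leveraging the root count in Lemma~2.1 of Cai (2009), or by invoking the closed-form Wiener--Hopf factorization available for hyper-exponential L\'evy processes; once (a) is secured, (b) and (c) reduce to linear algebra.
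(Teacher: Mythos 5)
First, a point of reference: the paper does not prove Lemma B.1 at all --- it is quoted from the literature, with the reader sent to Lemmas 2.2, 2.4 and 2.5 of Yin et al.\ (2013). So your proposal is being measured against the standard arguments there and in Cai (2009). Your overall architecture is the right one and matches those proofs: (a) the candidate span of bounded exponentials dictated by the roots of $\hat\psi=q$, (b) the reduction of the exit functional to the $n+1$ boundary data $g(c),g_{\vartheta_1}(c),\dots,g_{\vartheta_n}(c)$ via the memoryless undershoot, and (c) a linear system identifying the coefficients.

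The genuine gap is in step (c). By the interlacing in (3.2), $\hat\gamma_k>\vartheta_{k-1}$ for every $k\ge 2$; since the undershoot, conditionally on being caused by a type-$(k-1)$ jump, is $\mathrm{Exp}(\vartheta_{k-1})$, one has $\hat{\mathbb E}_x\bigl[e^{-q\hat\tau_c^-}e^{-\hat\gamma_k\hat Y_{\hat\tau_c^-}}\bigr]=+\infty$. Hence $e^{-qt-\hat\gamma_k\hat Y_t}$ stopped at $\hat\tau_c^-$ is neither bounded nor uniformly integrable, optional stopping does not apply, and the test function $g(y)=e^{-\hat\gamma_k y}$ violates the integrability hypothesis of the lemma itself ($g_{\vartheta_j}(c)=+\infty$ for $j\le k-1$, so ``the $k$-th row of $\hat Q$'' is only the formal, analytically continued --- and in fact negative --- value of a divergent integral). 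Only the extremal root $k=1$ survives; the same objection kills the test functions $e^{\tilde\beta_k y}$ and $e^{\beta_k y}$ for $k\ge 2$ in parts (ii) and (iii), because the overshoot above $C$ is a mixture of $\mathrm{Exp}(\eta_i)$ with $\eta_{k-1}<\tilde\beta_k$. The standard repair --- which also makes your self-declared ``main obstacle'' in step (a), the dimension count for bounded $q$-harmonic functions, unnecessary --- is to run the martingale argument in the opposite direction: posit $u(x)=\sum_k c_k e^{-\hat\gamma_k(x-c)}$ on $(c,\infty)$ and $u=g$ on $(-\infty,c]$, substitute into the associated integro-differential equation, and split the jump integral at the barrier; the coefficients of $e^{-\hat\gamma_k(x-c)}$ vanish automatically because $\hat\psi(-\hat\gamma_k)=q$, while annihilating the extraneous exponentials $e^{-\vartheta_j(x-c)}$ together with continuity at $c$ yields exactly $(c_1,\dots,c_{n+1})\hat Q=\bigl(g(c),g_{\vartheta_1}(c),\dots,g_{\vartheta_n}(c)\bigr)$. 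One then applies It\^o and optional stopping to the \emph{bounded} process $e^{-qt}u(\hat Y_t)$ to conclude $u(x)=\hat{\mathbb E}_x[e^{-q\hat\tau_c^-}g(\hat Y_{\hat\tau_c^-})]$, with uniqueness coming for free. (Alternatively, test with $e^{sy}$ only for $s$ in the range where all quantities are finite and analytically continue the resulting rational identity in $s$; but as written, your identification step is not valid.)
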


Moreover, the expressions for the distributions of $\hat{S}_{e(q)}:=\sup_{0\leq t \leq e(q)}\hat{Y}_t$ and $\hat{I}_{e(q)}:=\inf_{0 \leq t \leq e(q)}\hat{Y}_t$ are also required and are given in the following Lemma B.2. For its derivation, we refer to Lemma 1 in Asmussen et al. (2004).
\begin{lem}
(1) For $s>0$,
\begin{equation}
\hat{\mathbb E}\left[e^{-s\hat{S}_{e(q)}}\right]=\prod_{i=1}^{m}\left(\frac{s+\eta_k}{\eta_k}\right)
\prod_{k=1}^{m+1}\left(\frac{\hat{\beta}_k}{s+\hat{\beta}_k}\right)=\sum_{k=1}^{m+1}\frac{\hat{C}_k}
{s+\hat{\beta}_k},
\end{equation}
and
\begin{equation}
\hat{\mathbb P}\left(\hat{S}_{e(q)}\in dy\right)=\sum_{i=1}^{m+1}\hat{C}_ie^{-\hat{\beta}_i y}dy, \ \ y\geq 0,
\end{equation}
where
\begin{equation}
\frac{\hat{C}_i}{\hat{\beta}_i}=\prod_{k=1}^{m}\left(\frac{\eta_k-\hat{\beta}_i}{\eta_k}\right)\prod_{k=1, k\neq i}^{m+1}\left(\frac{\hat{\beta}_k}{\hat{\beta}_k-\hat{\beta}_i}\right), \ \ i=1,\ldots, m+1.
\end{equation}
(2) For $s >0$ and $y\geq 0$,
\begin{equation}
\hat{\mathbb E}\left[e^{s\hat{I}_{e(q)}}\right]=\prod_{i=1}^{m}\left(\frac{s+\vartheta_k}{\vartheta_k}\right)
\prod_{k=1}^{m+1}\left(\frac{\hat{\gamma}_k}{s+\hat{\gamma}_k}\right)=\sum_{j=1}^{n+1}
\frac{\hat{D}_j}{s+\hat{\gamma}_j},
\end{equation}
and
\begin{equation}
\hat{\mathbb P}\left(-\hat{I}_{e(q)}\in dy\right)=\sum_{j=1}^{n+1}\hat{D}_je^{-\hat{\gamma}_iy}dy,
\end{equation}
where
\begin{equation}
\frac{\hat{D}_j}{\hat{\gamma}_j}=\prod_{k=1}^{n}\left(\frac{\vartheta_k-\hat{\gamma}_j}{\vartheta_k}\right)
\prod_{k=1, k\neq i}^{n+1}\left(\frac{\hat{\gamma}_k}{\hat{\gamma}_k-\hat{\gamma}_i}\right), \ \ j=1,\ldots, n+1.
\end{equation}

\end{lem}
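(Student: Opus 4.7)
The plan is to derive both identities as a direct consequence of the Wiener--Hopf factorization applied to the hyper-exponential jump diffusion $\hat Y_t = X_t - \alpha_2 t$. The key observation is that the Laplace exponent $\hat\psi(z) = \psi(z) - \alpha_2 z$ is a rational function whose roots of $\hat\psi(z) = q$ are exactly $\hat\beta_1,\ldots,\hat\beta_{m+1}, -\hat\gamma_1,\ldots,-\hat\gamma_{n+1}$. I would start by clearing denominators to obtain
\begin{equation*}
q - \hat\psi(z) \;=\; \frac{\sigma^2}{2}\,\frac{\prod_{k=1}^{m+1}(\hat\beta_k - z)\prod_{j=1}^{n+1}(\hat\gamma_j + z)}{\prod_{i=1}^m(\eta_i - z)\prod_{j=1}^n(\vartheta_j + z)},
\end{equation*}
where the leading coefficient is fixed by comparing the $z^{m+n+2}$ term arising from the $\sigma^2z^2/2$ contribution. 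Evaluating at $z=0$ (so that the left side equals $q$) reconciles the prefactors and yields
\begin{equation*}
\frac{q}{q - \hat\psi(z)} \;=\; \left[\prod_{i=1}^m \frac{\eta_i - z}{\eta_i}\prod_{k=1}^{m+1}\frac{\hat\beta_k}{\hat\beta_k - z}\right]\cdot\left[\prod_{j=1}^n \frac{\vartheta_j + z}{\vartheta_j}\prod_{j=1}^{n+1}\frac{\hat\gamma_j}{\hat\gamma_j + z}\right].
\end{equation*}

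Next I would invoke the Wiener--Hopf factorization $q/(q - \hat\psi(z)) = \hat{\mathbb E}[e^{z\hat S_{e(q)}}]\,\hat{\mathbb E}[e^{z\hat I_{e(q)}}]$ together with its uniqueness on the strip where both factors are analytic. The first bracket above is analytic in $\{\mathrm{Re}(z) < \hat\beta_1\}$ and has no singularities for $\mathrm{Re}(z) \leq 0$, so it is a candidate for $\hat{\mathbb E}[e^{z\hat S_{e(q)}}]$; similarly the second bracket, analytic in $\{\mathrm{Re}(z) > -\hat\gamma_1\}$, corresponds to $\hat{\mathbb E}[e^{z\hat I_{e(q)}}]$. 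To confirm the identification I would check that substituting $z = -s$ with $s > 0$ in the first bracket gives a completely monotone function whose value at $s = 0$ equals $1$, and similarly for the infimum. Setting $z = -s$ yields (B.9) and, after partial fraction decomposition with residues computed at $s = -\hat\beta_k$, gives the coefficients $\hat C_k$ displayed in (B.11). Inverse Laplace transform then produces the density (B.10); the infimum case (B.12)--(B.14) follows by the symmetric argument using the mirror factor.

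The main obstacle is the rigorous justification of the identification of the factors. Uniqueness of the Wiener--Hopf factorization holds only in certain classes of factorizations (e.g.\ as the product of Laplace transforms of positive/negative half-line distributions), so I would need to argue that each bracketed expression is the Laplace transform of a proper probability measure on the correct half-line. This can be done by verifying: (i) the right analyticity strip; (ii) that each factor evaluates to $1$ at $z = 0$; (iii) positivity of the residue coefficients $\hat C_k$ and $\hat D_j$, which is a consequence of the interlacing inequalities in (3.2). Together with the fact that $\hat S_{e(q)}$ and $\hat I_{e(q)}$ have no atoms away from $0$ (since $\sigma > 0$ rules out creeping anomalies), these conditions pin down the factorization uniquely, delivering the explicit forms stated in (B.9)--(B.14).
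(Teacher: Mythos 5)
Your argument is correct: the paper gives no proof of Lemma B.2 itself but simply cites Lemma 1 of Asmussen et al.\ (2004), and your derivation --- factoring $q-\hat\psi(z)$ over its roots, normalizing at $z=0$, identifying the two brackets as the Wiener--Hopf factors of $\hat S_{e(q)}$ and $\hat I_{e(q)}$, and reading off $\hat C_i$, $\hat D_j$ by partial fractions with positivity from the interlacing in (3.2) --- is precisely the argument underlying that reference. So this is essentially the same approach as the paper's (cited) proof, carried out in a self-contained way.
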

\renewcommand{\therem}{B.\arabic{rem}}
\begin{rem}
For the constants $\hat{C}_i$ in (B.11) and $\hat{D}_j$ in (B.14), we have
\begin{equation}
\sum_{i=1}^{m+1}\frac{\hat{C}_i}{\hat{\beta}_i}=1 \ \ and \ \  \sum_{j=1}^{n+1}\frac{\hat{D}_j}{\hat{\gamma}_j}=1,
\end{equation}
which can be proved by set $s=0$ in (B.9) and (B.12). More importantly, as both sides of the second equality in (B.9) are rational function of $s$, we can extend this identity to the whole complex plane except at $-\hat{\beta}_1$, $\ldots$, $-\hat{\beta}_{m+1}$. Then, we obtain the following result from the extended identity by letting $s=-\eta_k$:
\begin{equation}
\sum_{i=1}^{m+1}\frac{\hat{C}_i}{\hat{\beta}_i-\eta_k}=0, \ \ for  \ \ k=1, 2, \ldots, m.
\end{equation}
Similarly, from (B.12), we have
\begin{equation}
\sum_{j=1}^{n+1}\frac{\hat{D}_j}{\hat{\gamma}_j-\vartheta_k}=0, \ \ for \ \ k=1, 2, \ldots, n.
\end{equation}
\end{rem}

\begin{proof}\{The proof of Theorem 3.1\}
This proof is dividend into two steps with the purpose of making it clearly. For the first step, we omit some details as similar arguments have been used in Zhou and Wu (2015).

(i) First, for given $y > b_2 > b_1$, we introduce a function of $x$ as
\begin{equation}
J(x) := \mathbb P_x(U_{e(q)} > y).
\end{equation}

For $x < b_1$, we have
\begin{equation}
\begin{split}
&J(x) = \mathbb E_x\left[\textbf{1}_{\{U_{e(q)} > y\}} \textbf{1}_{\{e(q) > \kappa^+_{b_1}\}}\right]=
\tilde{\mathbb E}_x\left[e^{-q \tilde{\tau}_{b_1}^{+}}J(\tilde{Y}_{\tilde{\tau}_{b_1}^{+}})\right].
\end{split}
\end{equation}
From (B.4), we obtain that
\begin{equation}
J(x) = \sum_{i=1}^{m+1}E_ie^{\tilde{\beta}_i(x-b_1)},\ \ for \ \  x < b_1,
\end{equation}
with
\begin{equation}
\left(E_1,\ldots, E_{m+1}\right)=\left(J(b_1),J_{\eta_1}(b_1),\ldots, J_{\eta_m}(b_1)\right)\tilde{Q}^{-1}.
\end{equation}

For $b_1<x<b_2$, we can derive that
\begin{equation}
\begin{split}
J(x)
&=\mathbb E_x\left[e^{-q\kappa_{b_2}^+}\textbf{1}_{\{
\kappa_{b_2}^+<\kappa_{b_1}^-\}}J(U_{\kappa_{b_2}^+})\right]+\mathbb E_x\left[e^{-q\kappa_{b_1}^-}\textbf{1}_{\{
\kappa_{b_1}^-<\kappa_{b_2}^+\}}J(U_{\kappa_{b_1}^-})\right]\\
&=\mathbb E_x\left[e^{-q\tau_{b_2}^+}\textbf{1}_{\{
\tau_{b_2}^+<\tau_{b_1}^-\}}J(X_{\tau_{b_2}^+})\right]+\mathbb E_x\left[e^{-q\tau_{b_1}^-}\textbf{1}_{\{
\tau_{b_1}^-<\tau_{b_2}^+\}}J(X_{\tau_{b_1}^-})\right].
\end{split}
\end{equation}
It follows from (B.6) that
\begin{equation}
J(x)=\sum_{i=1}^{m+1}F_ie^{\beta_i(x-b_2)}+\sum_{j=1}^{n+1}G_je^{\gamma_j(b_1-x)}, \ \ for \ \  b_1<x<b_2,
\end{equation}
with
\begin{equation}
\begin{split}
&\left(F_1, \ F_2, \ \ldots,\ F_{m+1},\ G_1,\ G_2,\ \ldots,\ G_{n+1}\right)=\\
&\left(J(b_2),J_{\eta_1}(b_2), \ldots, J_{\eta_m}(b_2), J(b_1), J_{\vartheta_1}(b_1), \ldots, J_{\vartheta_n}(b_1)\right)Q_{b_1,b_2}^{-1}.
\end{split}
\end{equation}

For $x > b_2$, we can deduce that
\begin{equation}
\begin{split}
J(x)
&= \mathbb E_x\left[\textbf{1}_{\{U_{e(q) >y}\}}\textbf{1}_{\{e(q)< \kappa_{b_2}^-\}}\right]+ \mathbb E_x\left[\textbf{1}_{\{U_{e(q) > y}\}}\textbf{1}_{\{e(q)> \kappa_{b_2}^-\}}\right]\\
&=\hat{\mathbb E}_x\left[\textbf{1}_{\{\hat{Y}_{e(q) >y}\}}\textbf{1}_{\{\hat{I}_{e(q)} > b_2\}}\right]+ \hat{\mathbb E}_x\left[e^{-q\hat{\tau}_{b_2}^-}J(\hat{Y}_{\hat{\tau}_{b_2}^-})\right]\\
&=\int_{b_2-x}^{0}\hat{\mathbb P}(\hat{S}_{e(q)} > y - x -z)\hat{\mathbb P}(\hat{I}_{e(q)}\in dz)+\hat{\mathbb E}_x\left[e^{-q\hat{\tau}_{b_2}^-}J(\hat{Y}_{\hat{\tau}_{b_2}^-})\right].
\end{split}
\end{equation}
Applying (B.2) and Lemma B.2 to (B.25) leads to that
\begin{equation}
J(x)=\sum_{i=1}^{m+1}H_ie^{\hat{\beta}_i(x-y)}+\sum_{j=1}^{n+1}M_je^{\hat{\gamma}_j(b_2-x)}, \ \ for \ \ b_2 < x \leq y,
\end{equation}
and that (note that $\hat{\mathbb P}\left(\hat{S}_{e(q)}> t\right)=1$ for $t \leq 0$)
\begin{equation}
J(x)=1+\sum_{j=1}^{n+1}N_je^{\hat{\gamma}_j(y-x)}+\sum_{j=1}^{n+1}M_je^{\hat{\gamma}_j(b_2-x)}, \ \ for \ \ x \geq y,
\end{equation}
where
\begin{equation}
\begin{split}
&H_i=\frac{\hat{C}_i}{\hat{\beta}_i}\sum_{j=1}^{n+1}\frac{\hat{D}_j}{\hat{\beta}_i+\hat{\gamma}_j}, \ \ i=1,2,\ldots, m+1,\\
&N_j=\hat{D}_j\sum_{i=1}^{m+1}\frac{\hat{C}_i}{\hat{\beta_i}(\hat{\beta}_i+\hat{\gamma}_j)}-\frac{\hat{D}_j}
{\hat{\gamma}_j}, \ \ j=1,2,\ldots, n+1,
\end{split}
\end{equation}
and
\begin{small}
\begin{equation}
\begin{split}
&\left(M_1, \ldots, M_{n+1}\right)
=-\left(\sum_{i=1}^{m+1}\frac{\hat{D}_1\hat{C}_i e^{\hat{\beta}_i(b_2-y)}}
{\hat{\beta}_i(\hat{\beta}_i+\hat{\gamma}_1)}, \ldots, \sum_{i=1}^{m+1}\frac{\hat{D}_{n+1}\hat{C}_i e^{\hat{\beta}_i(b_2-y)}}
{\hat{\beta}_i(\hat{\beta}_i+\hat{\gamma}_{n+1})}\right)\\
&\ \ \ \ \ \ \ \ \ \ \ \ \ \ \ \ \  \ \ \ \ \ \ \ \ +\big(J(b_2), J_{\vartheta_1}(b_2), \ldots, J_{\vartheta_n}(b_2)\big)\hat{Q}^{-1}.
\end{split}
\end{equation}
\end{small}

The expression for $H_i$ in (3.5) can be obtained from (B.11), (B.12) and (B.28). Moreover,
from (B.28), we have
\begin{equation}
\begin{split}
&N_j=\hat{D}_j\sum_{i=1}^{m+1}\frac{\hat{C}_i}{\hat{\beta_i}(\hat{\beta}_i+\hat{\gamma}_j)}-\frac{\hat{D}_j}
{\hat{\gamma}_j}
=\frac{\hat{D}_j}{\hat{\gamma}_j}\sum_{i=1}^{m+1}\hat{C}_i
\left(\frac{1}{\hat{\beta_i}}-\frac{1}{(\hat{\beta}_i+\hat{\gamma}_j)}\right)-\frac{\hat{D}_j}
{\hat{\gamma}_j},
\end{split}
\end{equation}
which combined with (B.9) and (B.15), yields the expression of $N_j$ in (3.5). Finally, formula (3.6) will be derived in the second step.

(2) On one hand, from (B.8) and (B.24), we have
\begin{equation}
J_{\vartheta_k}(b_1)=\sum_{i=1}^{m+1}\frac{F_i\vartheta_k}{\vartheta_k+\beta_i}e^{\beta_i(b_1-b_2)}+
\sum_{j=1}^{n+1}\frac{G_j\vartheta_k}
{\vartheta_k-\gamma_j},\ \ k=1, 2, \ldots, n,
\end{equation}
and
\begin{equation}
J_{\eta_k}(b_2)=\sum_{i=1}^{m+1}\frac{F_i\eta_k}{\eta_k-\beta_i}+\sum_{j=1}^{n+1}\frac{G_j\eta_k}
{\eta_k+\gamma_j}e^{\gamma_j(b_1-b_2)}, \ \ k=1, 2, \ldots, m.
\end{equation}
On the other hand, applying (B.20) yields
\begin{equation}
J_{\vartheta_k}(b_1)=\int_{-\infty}^{0}J(b_1+y)\vartheta_ke^{\vartheta_k y}dy
=\sum_{i=1}^{m+1}\frac{E_i\vartheta_k}{\vartheta_k+\tilde{\beta}_i}, \ \ k=1, 2, \ldots, n.
\end{equation}
For $1\leq k \leq m$, using (B.26) and (B.27), we obtain
\begin{equation}
\begin{split}
J_{\eta_k}(b_2)
&=\int_{0}^{\infty}J(b_2+z)\eta_ke^{-\eta_k z}dz
=e^{\eta_k(b_2-y)} +\sum_{j=1}^{n+1}\frac{N_j\eta_k}{\eta_k+\hat{\gamma}_j}e^{\eta_k(b_2-y)}\\
&\ \ +\sum_{i=1}^{m+1}\frac{H_i\eta_k}{\hat{\beta}_i-\eta_k}\left(e^{\eta_k(b_2-y)}-
e^{\hat{\beta}_i(b_2-y)}\right)+\sum_{j=1}^{n+1}\frac{M_j\eta_k}{\eta_k+\hat{\gamma}_j}.
\end{split}
\end{equation}

Therefore, from (B.31) $\sim$ (B.34), we immediately obtain that
\begin{equation}
\sum_{i=1}^{m+1}\frac{E_i\vartheta_k}{\vartheta_k+\tilde{\beta}_i}=\sum_{i=1}^{m+1}\frac{F_i\vartheta_k}
{\vartheta_k+\beta_i}e^{\beta_i(b_1-b_2)}+\sum_{j=1}^{n+1}\frac{G_j\vartheta_k}{\vartheta_k-\gamma_j}, \ \ k=1,2, \ldots, n,
\end{equation}
and that, for $1\leq k \leq m$,
\begin{small}
\begin{equation}
\sum_{i=1}^{m+1}\frac{F_i\eta_k}{\eta_k-\beta_i}+\sum_{j=1}^{n+1}\frac{G_j\eta_k}{\eta_k+\gamma_j}
e^{\gamma_j(b_1-b_2)}=\sum_{j=1}^{n+1}\frac{M_j\eta_k}{\eta_k+\hat{\gamma}_j}-\sum_{i=1}^{m+1}\frac{
H_i\eta_k}{\hat{\beta}_i-\eta_k}e^{\hat{\beta}_i(b_2-y)}.
\end{equation}
\end{small}
To derive (B.36), we have used the following identity:
\begin{equation}
\sum_{i=1}^{m+1}\frac{H_i\eta_k}{\hat{\beta}_i-\eta_k}
+1
+\sum_{j=1}^{n+1}\frac{N_j\eta_k}{\eta_k+\hat{\gamma}_j}=0,
\end{equation}
which can be proved as following:
\begin{equation}
\begin{split}
&\sum_{i=1}^{m+1}\frac{H_i\eta_k}{\hat{\beta}_i-\eta_k}
+\sum_{j=1}^{n+1}\frac{N_j\eta_k}{\eta_k+\hat{\gamma}_j}\\
&=\sum_{i=1}^{m+1}\frac{\hat{C}_i}{\beta_i}
\sum_{j=1}^{n+1}\frac{\hat{D}_j\eta_k}{\eta_k+\hat{\gamma}_j}\left(\frac{1}{\hat{\beta}_i-\eta_k}-
\frac{1}{\hat{\beta}_i+\hat{\gamma}_j}\right)+\sum_{j=1}^{n+1}\frac{N_j\eta_k}{\eta_k+\hat{\gamma}_j}\\
&=\left(\sum_{i=1}^{m+1}\frac{\hat{C}_i}{\hat{\beta}_i-\eta_k}-\sum_{i=1}^{m+1}\frac{\hat{C}_i}{\hat{\beta}_i}
\right)\sum_{j=1}^{n+1}\frac{\hat{D}_j\eta_k}{\eta_k+\hat{\gamma}_j}-\sum_{j=1}^{n+1}\frac{\hat{D}_j\eta_k}
{\eta_k+\hat{\gamma}_j}=-1,
\end{split}
\end{equation}
where the second equality follows from (B.28) and the third one is due to (B.15) and (B.16).

Next, it follows from (B.3), (B.5), (B.21) and (B.29) that
\begin{equation}
\begin{split}
&J_{\eta_k}(b_1)=\sum_{i=1}^{m+1}\frac{E_i\eta_k}{\eta_k-\tilde{\beta}_i}, \ \ k=1, 2, \ldots, m,
\end{split}
\end{equation}
and that
\begin{equation}
\begin{split}
&J_{\vartheta_k}(b_2)
=\sum_{j=1}^{n+1}\left(M_j+\sum_{i=1}^{m+1}\frac{\hat{C}_i\hat{D}_j}{
\hat{\beta}_i(\hat{\beta}_i+\hat{\gamma}_j)}e^{\hat{\beta}_i(b_2-y)}\right)\frac{\vartheta_k}
{\vartheta_k-\hat{\gamma}_j}\\
&=\sum_{j=1}^{n+1}\left(M_j+\sum_{i=1}^{m+1}\frac{\hat{C}_i\hat{D}_j\vartheta_k}{\hat{\beta}_i(\vartheta_k+
\hat{\beta}_i)}\left(\frac{1}{\hat{\beta}_i+\hat{\gamma}_j}+\frac{1}{\vartheta_k-\hat{\gamma}_j}\right)
e^{\hat{\beta}_i(b_2-y)}\right)\\
&=\sum_{j=1}^{n+1}\frac{M_j \vartheta_k}
{\vartheta_k-\hat{\gamma}_j}+\sum_{i=1}^{m+1}\frac{H_i\vartheta_k}{\vartheta_k+\hat{\beta}_i}
e^{\hat{\beta}_i(b_2-y)}, \ \ k=1, 2, \ldots, n.
\end{split}
\end{equation}
where the second equality can be verified by using (B.17) and (B.28).

Using (B.20), (B.23), (B.26) and (B.27), one can derive that
\begin{small}
\begin{equation}
\begin{split}
&J_{\vartheta_k}(b_2)
=\int_{-\infty}^{0}J(b_2+y)\vartheta_ke^{\vartheta_k y}dy=\sum_{j=1}^{n+1}\frac{G_j\vartheta_k}{
\vartheta_k-\gamma_j}\left(e^{\gamma_j(b_1-b_2)}-e^{\vartheta_k(b_1-b_2)}\right)\\
&+\sum_{i=1}^{m+1}\frac{E_i\vartheta_k}{\vartheta_k+\tilde{\beta}_i}
e^{\vartheta_k(b_1-b_2)}+\sum_{i=1}^{m+1}\frac{F_i\vartheta_k}{\vartheta_k+\beta_i}\left(1-
e^{(\vartheta_k+\beta_i)(b_1-b_2)}\right), \ 1 \leq k \leq n,
\end{split}
\end{equation}
\end{small}
and that, for $k=1, \ldots, m$,
\begin{small}
\begin{equation}
\begin{split}
&J_{\eta_k}(b_1)=\int_{0}^{\infty}J(b_1+z)\eta_ke^{-\eta_k z}dz=\sum_{j=1}^{n+1}\frac{M_j\eta_k}{\eta_k+\hat{\gamma}_j}e^{\eta_k(b_1-b_2)}
+e^{\eta_k(b_1-y)}\\
&+\sum_{i=1}^{m+1}\frac{F_i\eta_k}{\beta_i-\eta_k}\left(e^{\eta_k(b_1-b_2)}-e^{\beta_i(b_1-b_2)}
\right)+\sum_{j=1}^{n+1}\frac{G_j\eta_k}{\eta_k+\gamma_j}\left(1-e^{(\eta_k+\gamma_j)(b_1-b_2)}\right)\\
&+
\sum_{i=1}^{m+1}\frac{H_i\eta_k}{\hat{\beta}_i-\eta_k}\left(e^{\eta_k(b_1-y)}-e^{\eta_k(b_1-b_2)}
e^{\hat{\beta}_i(b_2-y)}\right)+\sum_{j=1}^{n+1}\frac{N_j\eta_k}{\eta_k+\hat{\gamma}_j}e^{\eta_k(b_1-y)}.
\end{split}
\end{equation}
\end{small}

Thus, for $1 \leq k \leq n$, applying formulas (B.35), (B.40) and (B.41) leads to
\begin{small}
\begin{equation}
\begin{split}
&\sum_{j=1}^{n+1}\frac{M_j\vartheta_k}{\vartheta_k-\hat{\gamma}_j}+\sum_{i=1}^{m+1}
\frac{H_i\vartheta_k}{\vartheta_k+\hat{\beta}_i}e^{\hat{\beta}_i(b_2-y)}
=\sum_{i=1}^
{m+1}\frac{F_i\vartheta_k}{\vartheta_k+\beta_i}+\sum_{j=1}^{n+1}\frac{G_j\vartheta_k}{\vartheta_k-\gamma_j}
e^{\gamma_j(b_1-b_2)},
\end{split}
\end{equation}
\end{small}
and for $k=1,2, \ldots, m$, applying (B.36), (B.37), (B.39) and (B.42) leads to
\begin{equation}
\sum_{i=1}^{m+1}\frac{E_i\eta_k}{\eta_k-\tilde{\beta}_i}=\sum_{i=1}^{m+1}\frac{F_i\eta_k}{\eta_k-\beta_i}
e^{\beta_i(b_1-b_2)}+\sum_{j=1}^{n+1}\frac{G_j\eta_k}{\eta_k+\gamma_j}.
\end{equation}

Finally, from part (1) of Lemma 2.1, we know that $J(x)$ is continuously differentiable on $\mathbb R$. This means that $J(b_1-)=J(b_1+)$, $J(b_2-)=J(b_2+)$, $J^{\prime}(b_1-)=J^{\prime}(b_1+)$ and $J^{\prime}(b_2-)=J^{\prime}(b_2+)$, which combined with (B.20), (B.23) and (B.26), yields
\begin{small}
\begin{equation}
\begin{split}
&\sum_{i=1}^{m+1}E_i=\sum_{i=1}^{m+1}F_ie^{\beta_i(b_1-b_2)}+\sum_{j=1}^{n+1}G_j,\\
&\sum_{i=1}^{m+1}E_i\tilde{\beta}_i=\sum_{i=1}^{m+1}F_i\beta_i e^{\beta_i(b_1-b_2)}-
\sum_{j=1}^{n+1}G_j\gamma_j,\\
&\sum_{i=1}^{m+1}F_i+\sum_{j=1}^{n+1}G_je^{\gamma_j(b_1-b_2)}=\sum_{i=1}^{m+1}H_ie^{\hat{\beta}_i(b_2-y)}
+\sum_{j=1}^{n+1}M_j,\\
&\sum_{i=1}^{m+1}F_i\beta_i-\sum_{j=1}^{n+1}G_j\gamma_je^{\gamma_j(b_1-b_2)}
=\sum_{i=1}^{m+1}H_i\hat{\beta}_ie^{\hat{\beta}_i(b_2-y)}
-\sum_{j=1}^{n+1}M_j\hat{\gamma}_j.
\end{split}
\end{equation}
\end{small}
Therefore, from (B.35), (B.36), (B.43), (B.44) and (B.45), we deduce (3.6). This completes the proof.
\end{proof}


\section{}
\begin{proof}\{The proof of Corollary 3.1\}
It follows from (3.4) and (3.14) that
\begin{small}
\begin{equation}
\begin{split}
&\mathbb P_x \left(U_{e(q)} > b_2\right)-\mathbb P_x\left(U_{e(q)} \geq b_2\right)=\lim_{y \downarrow b_2}\mathbb P_x\left(U_{e(q)} > y \right)-\lim_{y \uparrow b_2}\mathbb P_x\left(U_{e(q)} > y \right)\\
=&
\left\{
\begin{aligned}
& \sum_{i=1}^{m+1}\left(E^0_i-\hat{E}^0_i\right)e^{\tilde{\beta}_i(x-b_1)}, & x \leq b_1,\\
& \sum_{i=1}^{m+1}\left(F^0_i-\hat{H}^0_i-\hat{U}_i\right)e^{\beta_i(x-b_2)}
+\sum_{j=1}^{n+1}\left(G^0_j-\hat{G}^0_j\right)e^{\gamma_j(b_1-x)}, & b_1 \leq  x \leq b_2, \\
&\sum_{j=1}^{n+1}\left(N_j + M^0_j-\hat{N}^0_j\right)e^{\hat{\gamma}_j(b_2-x)}, & x \geq b_2,
\end{aligned}
\right.
\end{split}
\end{equation}
\end{small}
with
\begin{equation}
\begin{split}
(\hat{E}^0_1,\ldots,\hat{E}^0_{m+1},\hat{H}^0_1,\ldots,\hat{H}^0_{m+1},\hat{G}^0_1,\ldots,\hat{G}^0_{n+1}
,\hat{N}^0_1, \ldots, \hat{N}^0_{n+1})Q_1=\hat{h}^0,\\
\left(E^0_1,\ldots,E^0_{m+1},F^0_1,\ldots,F^0_{m+1},G^0_1,\ldots,G^0_{n+1}
,M^0_1, \ldots, M^0_{n+1}\right)Q_1=h^0,
\end{split}
\end{equation}
where $h^0$ and $\hat{h}^0$ are given by $h$ in (3.7) and $\hat{h}$ in (3.17) with $y=b_2$, respectively.

Similar to derive (B.37), we can obtain the following equalities by using (B.15), (B.17) and (B.28):
\begin{equation}
\begin{split}
&\sum_{i=1}^{m+1}H_i=1+\sum_{j=1}^{n+1}N_j,\\ &\sum_{i=1}^{m+1}H_i\hat{\beta}_i+\sum_{j=1}^{n+1}N_j\hat{\gamma}_j=0,\\
&\sum_{i=1}^{m+1}\frac{H_i\vartheta_k}{\vartheta_k+\hat{\beta}_i}-\sum_{j=1}^{n+1}\frac{N_j\vartheta_k}
{\vartheta_k-\hat{\gamma}_j}-1=0,  \ \ for \ \  k=1,2,\ldots, n.
\end{split}
\end{equation}

From (3.20), (B.37), (C.2), (C.3) and (A.1), we can verify easily that
\begin{equation}
E^0_i=\hat{E}^0_i, \ \ F^0_i-\hat{H}^0_i-\hat{U}_i=0,\ \ G^0_j-\hat{G}^0_j=0, \ \ N_j + M^0_j-\hat{N}^0_j=0,
\end{equation}
i.e.,
\begin{equation}
\begin{split}
(\underbrace{0,\ldots,0}_{m+1},\hat{U}_1,\ldots,\hat{U}_{m+1},\underbrace{0,\ldots,0}_{n+1}
,-N_1, \ldots, -N_{n+1})Q_1=h^0-\hat{h}^0.
\end{split}
\end{equation}

Combining (C.1) with (C.4) leads to
\begin{equation}
\mathbb P_x\left(U_{e(q)}=b_2\right)=0.
\end{equation}
The proof of $\mathbb P_x\left(U_{e(q)}=b_1\right)=0$ is similar, thus we omit the details.
\end{proof}

\begin{proof}\{The proof of Corollary 3.2\}
For fixed $b_1$, letting $b_2 \downarrow b_1$ in (3.4), we immediately deduce (3.23) with
$E^1_i:=\lim_{b_2 \downarrow b_1} E_i$ and $M_j^1:=\lim_{b_2 \downarrow b_1} M_j$.

From (B.35) and (B.43) with $b_2 \downarrow b_1$, we can obtain
\begin{equation}
\sum_{j=1}^{n+1}\frac{M^1_j}{\vartheta_k-\hat{\gamma}_j}+\sum_{i=1}^{m+1}\frac{H_i}{\vartheta_k+\hat{\beta}_i}e^{
\hat{\beta}_i(b_1-y)}=\sum_{i=1}^{m+1}\frac{E^1_i}
{\vartheta_k+\tilde{\beta}_i}, \ \ 1 \leq k \leq n.
\end{equation}
Besides, it following from (B.36) and (B.44) with $b_2=b_1$ that
\begin{equation}
\sum_{i=1}^{m+1}\frac{E^1_i}{\eta_k-\tilde{\beta}_i}=\sum_{j=1}^{n+1}\frac{M^1_j}{\eta_k+
\hat{\gamma}_j}-\sum_{i=1}^{m+1}\frac{H_i}{\hat{\beta}_i-\eta_k}e^{\hat{\beta}_i(b_1-y)}, \ \
1\leq k \leq m.
\end{equation}
Applying (B.45) with $b_2=b_1$ produces
\begin{equation}
\begin{split}
&\sum_{i=1}^{m+1}E^1_i=\sum_{i=1}^{m+1}H_ie^{\hat{\beta}_i(b_1-y)}+\sum_{j=1}^{n+1}M_j^1,\\
&\sum_{i=1}^{m+1}E^1_i\tilde{\beta}_i=\sum_{i=1}^{m+1}H_i\hat{\beta}_ie^{\hat{\beta}_i(b_1-y)}
-\sum_{j=1}^{n+1}M_j^1\hat{\gamma}_j.
\end{split}
\end{equation}

Next, we want to solve (C.7), (C.8) and (C.9). First,
define a function of $x$ as
\begin{equation}
f(x)=\sum_{i=1}^{m+1}\frac{E^1_i}{x-\tilde{\beta}_i}-\sum_{j=1}^{n+1}\frac{M_j^1}{x+\hat{\gamma}_j}
-\sum_{i=1}^{m+1}\frac{H_i}{x-\hat{\beta}_i}e^{\hat{\beta}_i(b_1-y)}.
\end{equation}

From (C.7) and (C.8), we will obtain that
$ f(-\vartheta_k)=0$ for $1\leq k \leq n$ and $f(\eta_k)=0$ for $1\leq k \leq m$.
Combining these results with (C.9), we obtain that
\begin{equation}
f(x)=\frac{\prod_{i=1}^{m}(x-\eta_i)\prod_{j=1}^{n}(x+\vartheta_j)}{\prod_{i=1}^{m+1}(x-\tilde{\beta}_i)
\prod_{j=1}^{n+1}(x+\hat{\gamma}_j)}\frac{l_m x^m+ l_{m-1} x^{m-1}+\cdots + l_0}{
\prod_{i=1}^{m+1}(x-\hat{\beta}_i)},
\end{equation}
for some proper constants $l_m$, $l_{m-1}$, $\ldots$, $l_0$. Furthermore, for $1\leq i \leq m+1$, it follows from the definition (C.10) that
\begin{equation}
\lim_{x\rightarrow \hat{\beta}_i}f(x)(x-\hat{\beta}_i)=-H_ie^{\hat{\beta}_i(b_1-y)}.
\end{equation}
From (C.11) and (C.12), we conclude that $f(x)$ has another form as following:
\begin{equation}
f(x)=\frac{\prod_{i=1}^{m}(x-\eta_i)\prod_{j=1}^{n}(x+\vartheta_j)}{\prod_{i=1}^{m+1}(x-\tilde{\beta}_i)
\prod_{j=1}^{n+1}(x+\hat{\gamma}_j)}\sum_{i=1}^{m+1}\frac{\prod_{k=1}^{m+1}(\hat{\beta}_i-\tilde{\beta}_k)
\prod_{k=1}^{n+1}(\hat{\beta}_i+\hat{\gamma}_k)}{\prod_{k=1}^{m}(\hat{\beta}_i-\eta_k)
\prod_{k=1}^{n}(\hat{\beta}_i+\vartheta_k)}\frac{-H_i}{x-\hat{\beta}_i}e^{\hat{\beta}_i(b_1-y)}
.
\end{equation}

Therefore, from (C.10), (C.13) and the expression of $H_i$ in (3.5), we can derive (3.24) and (3.25). The derivation of (3.26) from Theorem 3.2 is very similar, thus we omit the details. For (3.29), one can obtain  it by using a similar idea to that in the proof of Corollary 3.1. However, as this process involves some computations, we give the details for the convenience of the reader.

From (3.23) and (3.26), we can show that
\begin{equation}
\begin{split}
&\mathbb P_x\left(U_{e(q)} > b_1\right)+\mathbb P_x\left(U_{e(q)} < b_1\right)\\
=&\lim_{y \downarrow b_1}\mathbb P_x\left(U_{e(q)} > y\right)+\lim_{y \uparrow b_1}\mathbb P_x\left(U_{e(q)} < y\right)\\
=&\begin{small}
\left\{
\begin{aligned}
&1 + \sum_{i=1}^{m+1}\left(\tilde{E}_i + \tilde{F}^{1,0}_i + E_i^{1,0} \right)e^{\tilde{\beta}_i(x- b_1)}, & x\leq b_1,\\
&1+\sum_{j=1}^{n+1}\left(\tilde{N}^{1,0}_j+N_j+M_j^{1,0}\right)e^{\hat{\gamma}_j(b_1-x)}, & x \geq b_1,
\end{aligned}
\right.
\end{small}
\end{split}
\end{equation}
where $M_j^{1,0}$, $E_i^{1,0}$, $\tilde{N}_j^{1,0}$ and $\tilde{F}_i^{1,0}$ are given respectively by $M^1_j$, $E_i^1$, $\tilde{N}_j^1$ and $\tilde{F}_i^1$ with $y=b_1$. From (3.5), (3.10), (3.24), (3.25), (3.27) and (3.28), we can obtain the following result by using Lemma C.1:
\begin{equation}
\tilde{E}_i + \tilde{F}^{1,0}_i + E_i^{1,0}=0, \ \ \tilde{N}^{1,0}_j+N_j+M_j^{1,0}=0.
\end{equation}
Therefore, formulas (C.14) and (C.15) lead to (3.29).
\end{proof}
\renewcommand{\thelem}{C.1}
\begin{lem}
For distinct constants $\tilde{l}_1, \ldots, \tilde{l}_{n_1}$ and arbitrary constants $\hat{l}_1, \ldots, \hat{l}_{m_1}$ with $m_1< n_1-1$, we have
\begin{equation}
\sum_{i=1}^{n_1}\frac{\prod_{k=1}^{m_1}(\tilde{l}_i-\hat{l}_k)}{\prod_{k=1,k\neq i}^{n_1}(\tilde{l}_i-\tilde{l}_k)}=0.
\end{equation}
\end{lem}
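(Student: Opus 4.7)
The plan is to recognize the identity as a standard consequence of Lagrange interpolation applied to the polynomial $P(x):=\prod_{k=1}^{m_1}(x-\hat{l}_k)$, which has degree exactly $m_1$. Because $\tilde{l}_1,\ldots,\tilde{l}_{n_1}$ are pairwise distinct, Lagrange interpolation gives an exact representation of any polynomial of degree at most $n_1-1$ in terms of its values at these nodes; since $m_1\leq n_1-2<n_1-1$ by hypothesis, $P$ qualifies, and I will use this representation.

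First I would write
\begin{equation*}
P(x)=\sum_{i=1}^{n_1} P(\tilde{l}_i)\prod_{k=1,k\neq i}^{n_1}\frac{x-\tilde{l}_k}{\tilde{l}_i-\tilde{l}_k},
\end{equation*}
and then read off the coefficient of $x^{n_1-1}$ on both sides. On the left the coefficient is $0$, because $\deg P=m_1\leq n_1-2$. On the right, each Lagrange basis polynomial $\prod_{k\neq i}(x-\tilde{l}_k)$ is monic of degree $n_1-1$, so its leading coefficient is $1$, and the coefficient of $x^{n_1-1}$ on the right equals $\sum_{i=1}^{n_1}P(\tilde{l}_i)/\prod_{k\neq i}(\tilde{l}_i-\tilde{l}_k)$. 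Substituting $P(\tilde{l}_i)=\prod_{k=1}^{m_1}(\tilde{l}_i-\hat{l}_k)$ yields the desired identity.

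An equally short alternative would be a partial-fraction/residue argument: the rational function $R(x)=P(x)/\prod_{k=1}^{n_1}(x-\tilde{l}_k)$ has simple poles at the $\tilde{l}_i$ with residues equal to the summands in the identity, and decays like $x^{m_1-n_1}=O(x^{-2})$ as $|x|\to\infty$; expanding $\sum_i c_i/(x-\tilde{l}_i)=x^{-1}\sum_i c_i+O(x^{-2})$ and matching the $x^{-1}$ coefficient forces $\sum_i c_i=0$. Either route is essentially one line.

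There is no genuine obstacle: the identity is a classical degree-counting fact, and the hypothesis $m_1<n_1-1$ is precisely what guarantees the vanishing of the $x^{n_1-1}$ coefficient (equivalently, the $x^{-1}$ decay at infinity). The only thing to watch is that the inequality is strict, so I would remark explicitly that $m_1\leq n_1-2$ is exactly the condition needed.
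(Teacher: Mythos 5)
Your proof is correct and is essentially the paper's own argument: the paper writes the partial-fraction decomposition of $\prod_{k}(x-\hat{l}_k)/\prod_{k}(x-\tilde{l}_k)$ and observes that the sum in question is the coefficient of $x^{n_1-1}$ in the numerator, which vanishes since $m_1<n_1-1$; your Lagrange-interpolation identity is exactly that decomposition multiplied through by $\prod_{k}(x-\tilde{l}_k)$, and your residue alternative is the same computation at infinity. No gaps.
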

\begin{proof}
The proof is easy by noting that the left-hand side of (C.16) is the coefficient of $x^{n_1-1}$ in the numerator of the following rational function:
\begin{equation}
\frac{\prod_{i=1}^{m_1}(x-\hat{l}_i)}{\prod_{i=1}^{n_1}(x-\tilde{l}_i)}=
\sum_{i=1}^{n_1}\frac{\prod_{k=1}^{m_1}(\tilde{l}_i-\hat{l}_k)}{\prod_{k=1,k\neq i}^{n_1}(\tilde{l}_i-\tilde{l}_k)}\frac{1}{x-\tilde{l}_i}.
\end{equation}
\end{proof}

\end{appendix}

\bigskip


\begin{thebibliography}{References}
\setlength{\itemsep}{0pt}
\bibitem{AW} Abate, J., Whitt, W., 1992. The Fourier series method for inverting transforms of probability distributions. Queueing Systems 10, 5--87.

\bibitem{AAM} Asmussen, S., Avram, F., Pistorius, M.R., 2004. Russian and American put options under exponential phase-type L\'evy models. Stochastic Processes and their Applications 109, 79--111.

\bibitem{BM} Bacinello, A.R., Millossovich, P., Olivieri, A., Pitacco, E., 2011. Variable annuities: a unifying valuation approach. Insurance: Mathematics and Economics 49, 285--297.

\bibitem{BKR} Bauer, D., Kling, A., Russ, J., 2008. A universal pricing framework for guaranteed minimum benefits in variable annuities. Astin Bulletin 38, 621--651.

\bibitem{BHM} Bernard, C., Hardy, M., Mackay, A., 2014a. State-dependent fees for variable annuity guarantees. Astin Bulletin 44, 559--585.

\bibitem{BMM} Bernard, C., MacKay, A., Muehlbeyer, M., 2014b. Optimal surrender policy for variable annuity guarantees. Insurance: Mathematics and Economics 55, 116--128


\bibitem{C} Cai, N., 2009. On the first passage times of a hyper-exponential jump diffusion process. Operations Research Letters 37, 127--134.


\bibitem{D} Delong, {\L}., 2014. Pricing and hedging of variable annuities with state-dependent fees.
Insurance: Mathematics and Economics 58, 24--33.

\bibitem{DA} Dubner, H., Abate, J., 1968. Numerical inversion of Laplace transforms by relating them to the finite Fourier cosine transform.  Journal of the ACM 15, 115--123.


\bibitem {GS1} Gerber, H.U., Shiu, E.S.W., 1994. Option pricing by Esscher transforms. Transactions of the Society of Actuaries 46, 99-140. Discussions 141--191.

\bibitem{GS} Gerber, H.U., Shiu, E.S.W., 2003. Pricing perpetual fund protection with withdrawal option.
North American Actuarial Journal 7(2), 60--77. Discussion 77--92.



%




%
\bibitem{KL} Kyprianou, A.E., Loeffen, R.L., 2010. Refracted L\'evy processes. Annales de l Institut Henri Poincare-Probabilites et Statistique 46, 24--44.
%


\bibitem{P} Petrella, G., 2004. An extension of the Euler Laplace transform inversion algorithm with applications in option pricing. Operationa Research Letters 32, 380--389.
%

\bibitem{YZ}Yang, H., Zhang, Z., 2009. The perturbed compound Poisson risk model with multi-layer
dividend strategy. Statistics and Probability Letters 79, 70--78.

\bibitem{YSW} Yin, C., Shen, Y., Wen, Y., 2013. Exit problems for jump process with applications to dividend problems. Journal of Computational and Applied Mathematics 245, 30--52.


\bibitem{ZW} Zhou, J., Wu, L. 2015. Valuing equity-linked death benefits with a threshold expense strategy. Insurance: Mathematics and Economics 62, 79--90.
 \end{thebibliography}
\end{document}